\newtheorem{thm}{Theorem}[section]
\newtheorem{thm2}{Theorem}
\newtheorem{thm3}{Theorem}[section]
\newtheorem{thm4}{Theorem}[section]
\newtheorem{lem}[thm]{Lemma}
\newtheorem{prop}[thm]{Proposition}
\newtheorem{defn}[thm]{Definition}
\numberwithin{equation}{section}
\newtheorem{algorithm}[thm2]{Algorithm}
\newtheorem{rem}[thm3]{Remark}
\newtheorem{example}[thm4]{Example}
\def \spn {\,{\rm span}\,}
\def \argmin {\,{\rm argmin}\,}
\newcommand{\se}{\text{e}}
\newcommand{\si}{\text{i}}
\newcommand{\bR}{\mathbb{R}}
\newcommand{\bN}{\mathbb{N}}
\newcommand{\bZ}{\mathbb{Z}}
\newcommand{\va}{\boldsymbol{a}}
\newcommand{\vc}{\boldsymbol{c}}
\newcommand{\vhf}{\boldsymbol{\hat{f}}}
\newcommand{\cG}{\mathcal{G}}
\newcommand{\cH}{\mathcal{H}}
\newcommand{\mPsi}{\mathsf{\Psi}}
\newcommand{\mPhi}{\mathsf{\Phi}}
\newcommand{\mOmega}{\mathsf{\Omega}}
\newcommand{\tpsi}{\tilde{\psi}}
\begin{document}

\title{Approximating the Inverse Frame Operator from Localized Frames}

\author{Guohui Song\thanks{School of Mathematical and Statistical Sciences, Arizona State University, Tempe, AZ 85287. E-mail address: {\it gsong9@asu.edu}.}
\and Anne Gelb \thanks{School of Mathematical and Statistical Sciences, Arizona State University, Tempe, AZ 85287. E-mail address:{\it anne.gelb@asu.edu}. Supported in part by NSF-DMS-FRG award 0652833.}}

\date{}
\maketitle

\begin{abstract}
This investigation seeks to establish the practicality of numerical frame approximations.  Specifically, it develops a new method to approximate the inverse frame operator and analyzes its convergence properties.   It is established  that sampling with {\em well-localized frames} improves both the accuracy of the numerical frame approximation as well as the robustness and efficiency of the (finite) frame operator inversion. Moreover, in applications such as magnetic resonance imaging, where the given data often may not constitute a well-localized frame, a technique is devised to project the corresponding frame data onto a more suitable frame.  As a result, the target function may be approximated as a finite expansion with its asymptotic convergence solely dependent on its smoothness.   Numerical examples are provided.

{\bf Keywords.} Inverse Frame Operator; Fourier Frames; Localized Frames; Numerical Frame Approximation.

{\bf MSC.} 42C15; 42A50; 65T40
\end{abstract}

\section{Introduction}\label{sec:introduction}
Due to their flexible nature, frames make useful representation tools for a variety of applications.  For example, in signal processing applications, the redundancy of frames is beneficial if signals are suspected of not capturing certain pieces of information.  Not enforcing orthogonality of traditional bases also is useful when small amounts of interference does not present too many difficulties, but working with a large (albeit orthogonal polynomial based) system does.  It is also possible that there are some functions that are better represented by frames than  by traditional orthogonal bases.    A nice introduction to frames in the context of some of these applications can be found in \cite{KC1, KC2}.

In several applications,  such as magnetic resonance imaging (MRI),
data may be collected as a series of non-uniform Fourier coefficients (see
e.g. \cite{BW00, pipe, sedarat, Aditya09}).  Since standard Fourier
reconstruction methods cannot be straightforwardly applied,  the
current methodology can generally be described as an interpolation or
approximation of the data onto Fourier integer coefficients which enables
image reconstruction via the Fast Fourier transform (FFT).\footnote{Most
often, of course, the target image is only piecewise smooth so
the Gibbs phenomenon is still evident in the reconstruction and must
be properly addressed.}  Convergence analysis for several common
MRI reconstruction algorithms was performed in \cite{Aditya09},
where it was shown that it is possible to post-process
the (interpolated) integer Fourier coefficients  to resolve the Gibbs 
phenomenon.   However, it was also demonstrated there that
the dominant reconstruction error was due to ``resampling''
the non-integer data onto integer coefficients, typically at best
${\mathcal O}(1/N^2)$ for given $m = {\mathcal O}(N)$ coefficients.  
Since then, in \cite{GelbHines} it was suggested that in such applications
it might be better {\em not to} resample the non-integer coefficients,
and thereby avoid the resampling error entirely.  In fact, even for
piecewise smooth functions, if the original data set constitutes a finite
number of Fourier frame coefficients, then the 
Gibbs phenomenon can be removed directly by using the same
post-processing techniques as in the uniform case.  In particular,
in \cite{GelbHines}, the spectral reprojection method, \cite{GS97},
was shown to yield {\em exponential convergence} in this case.  It was further
shown there that even if the original data could not be considered
as a finite set of coefficients of the truncated Fourier frame expansion
(i.e., the corresponding infinite sequence did not form a Fourier frame), 
the same reconstruction methods could still be applied, although not with
exponential accuracy.  

One of the main difficulties in approximating a function from its
frame coefficients, independent of its smoothness properties,
lies in the construction of the (finite) inverse frame
operator.  The frame algorithm devised in \cite{DS52} and 
accelerated in \cite{MR1955936, Grochenig1993} is iterative
and its speed greatly depends on the frame bounds.  Other iterative
methods can also be used, but inherently depend on what is known about
the frame bounds.  Furthermore, conditions that guarantee the overall
convergence of a truncated frame expansion are not well understood.
Hence the usefulness of numerical frame approximations is not
yet well established.

In this investigation we seek to establish the practicality of numerical frame approximations by developing a new approximation method for the inverse frame operator.  We establish  that sampling with {\em well-localized frames} improves both the accuracy of the numerical frame approximation as well as the robustness and efficiency of the (finite) frame operator inversion. Moreover, in applications such as magnetic resonance imaging, where the given data often may not constitute a well-localized frame, a technique is devised to project the corresponding frame data onto a more suitable frame.  As a result, the target function may be approximated as a finite expansion with its asymptotic convergence solely dependent on its smoothness.  
If the target function is only piecewise smooth, it is possible to apply high order post-processing methods, as demonstrated in \cite{GelbHines},  to remove the Gibbs phenomenon.

The paper is organized as follows:
Section \ref{sec:review} reviews some fundamental aspects of frame theory.  In Section \ref{section:localized} we establish the convergence rate of the Casazza-Christensen method of approximating the inverse frame operator for well-localized frames, \cite{Casazza2000, Christensen2000}. However, the convergence rate fails to hold when the sampling frame is not well-localized. To overcome this difficulty we propose a new method of approximating the inverse frame operator and prove its convergence rate in Section \ref{section:non-localized}.  In Section \ref{section:reconstruction} we use this approximation technique to develop a new numerical frame approximation method.  We demonstrate the effectiveness of our method  with some numerical experiments.  Concluding remarks are provided in Section \ref{section:conclusion}.

\section{Sampling with Frames}
\label{sec:review}
Let us first review the definition of frame (see  \cite{Christensen2003} for more details).  
\begin{defn}\label{def_frame}
Let $\cH$ be a separable Hilbert space and let $\{\psi_j\}_{j=1}^\infty$ be a frame for $\cH$ with bounds $A$ and $B$. That is, we have for all $f\in \cH$
\begin{equation}\label{framebound}
A\| f\|^2 \leq \sum_{j=1}^\infty \left| \langle f, \psi_j \rangle\right|^2 \leq B \| f\|^2, \quad A,B>0.
\end{equation}
The {\it frame operator} $S: \cH \rightarrow \cH$ is defined as
\begin{equation*}
Sf:= \sum_{j=1}^\infty \langle f, \psi_j \rangle \psi_j, \quad f\in \cH.
\end{equation*}
\end{defn}
Note that the frame operator $S$ is bounded invertible by the frame condition,  \eqref{framebound}. Moreover, any function $f\in \cH$ can be recovered from the sampling data $\{\langle f, \psi_j \rangle\}_{j=1}^\infty$ by
\begin{equation}
\label{framesum}
f=\sum_{j=1}^\infty \langle f, \psi_j \rangle \tpsi_j,
\end{equation}
where 
\begin{equation}\label{dualframe}
\tpsi_j:=S^{-1}\psi_j, \,j\in \bN 
\end{equation}
is called the {\it dual frame}.

Since $S^{-1}$ is generally not available in closed form, it will be necessary to construct $\tilde{S}_N^{-1}$, a finite-dimensional subspace approximation corresponding to ${\mathcal O}(N)$ finitely sampled frame coefficients or an ${\mathcal O}(N)$ truncated series expansion. A general method of approximating the inverse frame operator $S^{-1}$ was proposed  in \cite{Casazza2000} and its convergence was discussed in \cite{Casazza2000, Christensen2000} (see also \cite{Christensen2003}). In what follows, we will call this technique the Casazza-Christensen method. Note that the convergence rate for this method has yet to be established.

Our investigation seeks to establish the convergence rate of inverse frame operators under a certain set of constraints, which is essential in developing numerical frame approximations.  To this end, we will use the concept of {\em localized frames} \cite{Grochenig2004}:
\begin{defn}\label{def_localizedframe}
Let $\{\psi_j\}_{j=1}^\infty$ be a frame as defined in Definition \ref{def_frame}.  We say that $\{\psi_j\}_{j=1}^\infty$ is localized with respect to the Riesz basis $\{\phi_j\}_{j=1}^\infty$ with decay $s>0$ if
\begin{equation}\label{localizationassumption}
\left|\langle \psi_j, \phi_l \rangle\right| \leq c (1+ |j-l|)^{-s} \quad \mbox{and } \left|\langle \psi_j, \tilde{\phi}_l \rangle\right| \leq c (1+ |j-l|)^{-s}, \quad c>0, j,l\in \bN.
\end{equation}  
\end{defn}
The convergence rate of the numerical approximation to the inverse frame operator is directly related to the localization factor $s$.  For example, when $\{\phi_j\}_{j=1}^\infty$ is an orthonormal basis, it was shown in \cite{Christensen2005, Grochenig2010} that the {\em finite section method} approximates the inverse frame operator with a convergence rate dependent on localization rate $s$. The finite section method first establishes an bi-infinite linear system with the coefficients in the frame expansion and then approximates the solution by truncating the system:  
\begin{algorithm}\label{finitesection}(Finite Section Method \cite{Christensen2005, Grochenig2010})
Suppose $\{\psi_j\}_{j = 1}^\infty$ is a frame and we wish to approximate its inverse frame operator $S^{-1}$. That is, for a given function $f$, we wish to approximate $g=S^{-1}f$. Suppose further that $\{\phi_i\}_{i = 1}^\infty$ is an orthonormal basis.  
\begin{enumerate}
\item Define $g = \sum_{i = 1}^\infty \hat{g}_i \phi_i$ where $\hat{g}_i  = \langle g, \phi_i\rangle$ are the basis coefficients.
\item To determine $g$, it is equivalent to find $\{\hat{g}_i\}$. We consider $Sg = f$ for $g$ defined above.
\item Taking the inner products of both sides with the orthonormal basis $\{\phi_j\}_{j=1}^\infty$, we have 
$$\langle Sg, \phi_j \rangle = \langle f,\phi_j \rangle  = \hat{f}_j,\hspace{.2in}j = 1,2,...$$
\item The definition of $g$ then yields $\sum_{i = 1}^\infty \hat{g}_i \langle S\phi_i, \phi_j\rangle = \hat{f}_j$, where
\mbox{$S\phi_i = \sum_{k= 1}^\infty \langle \phi_i, \psi_k \rangle \psi_k$} by Definition \ref{def_frame}.
\item We solve the system for $\hat{g}_i$.  The maximum truncation values to ensure numerical stability and accuracy for the system are discussed in Remark \ref{rem:remark1}. 
\end{enumerate}
\end{algorithm}

In \cite{Christensen2005, Grochenig2010} it was shown that the convergence rate for Algorithm \ref{finitesection} is $s-1$ for the localization factor $s$ given in (\ref{localizationassumption}).  However, it is important to note that the method is applicable only for frames localized to Riesz bases, and is {\em not} directly applicable to the more general case of {\it intrinsically (self) localized frames}:
\begin{defn}\label{self-localization}
Let $\psi$  be a frame as defined in Definition \ref{def_frame}.  We say that $\psi$ is {\em intrinsically (self) localized} if
\begin{equation}\label{self-localization}
\left|\langle \psi_j, \psi_l \rangle\right| \leq c_0 (1+ |j-l|)^{-s}, \quad c_0>0, j,l\in \bN,
\end{equation}
with $s>1$.  
\end{defn}
Note that localization with respect to a Riesz basis \eqref{localizationassumption} implies the self-localization \eqref{self-localization} \cite{Fornasier2005}. When the sampling frame $\{\psi_j\}_{j=1}^\infty$ is intrinsically localized, we focus on the convergence rate of the Casazza-Christensen method proposed  in \cite{Casazza2000}. In fact, in \S \ref{section:localized} we show that this method yields better convergence than the finite section method given even less sampling data. However, as we also demonstrate in \S \ref{section:localized}, the convergence rate for the Casazza-Christensen method is $s-1/2$, very slow for small $s$ and this rate is failing to hold for $s\leq 1$. This result can be quite restrictive in several applications.  For example, new data collection techniques in magnetic resonance imaging (MRI) acquires a (finite) sampling of Fourier data on a spiral trajectory, \cite{pipe, sedarat}.  The corresponding Fourier frame \cite{BW00} is not well localized. Motivated by a desire to improve the quality of images reconstructed from sampling with Fourier frames (also called non-uniform Fourier data in the medical imaging literature), in Section \ref{section:non-localized} we also investigate the approximation of the inverse frame operator when the sampling frame $\{\psi_j\}_{j=1}^\infty$ is {\it not well-localized}, that is, when \eqref{self-localization} holds for some $s\in (\frac{1}{2}, 1]$. Specifically, to improve the convergence behavior, we introduce a new frame with an admissible localization rate and make use of the projection onto the finite-dimensional subspace by a well-localized frame (rather than using the original sampling frame with the slow localization rate). We remark that while our method is useful for sampling frames with $s \le 1$, it can also be used to improve the convergence rate for sampling frames with $s > 1$, but greater localization may be desired for the particular application.

\section{Constructing $S^{-1}$ for Localized Frames}\label{section:localized}
Let us assume that the sampling frame $\{\psi_j\}_{j=1}^\infty$ is intrinsically localized, that is, it satisfies \eqref{self-localization} for some $s>1$. In this section we analyze the convergence properties and establish the rate of convergence for the Casazza-Christensen method for approximating the inverse frame operator under this localization assumption. The method is reviewed below. (For more details, see \cite{Casazza2000, Christensen2003}.) 

For any $n\in \bN$, we let $\cH_n:=\spn\{\psi_j: 1\leq j\leq n\}$ be the finite-dimensional subspace of $\cH$ for $n\in \bN$. The finite subset $\{\psi_j\}_{j=1}^n$ is a frame for $\cH_n$ (c.f. \cite{Christensen2003}) with the frame operator $S_n: \cH \rightarrow \cH_n$ given by
\begin{equation}\label{finiteframe}
S_n f:= \sum_{j=1}^n \langle f, \psi_j \rangle \psi_j, \quad f\in \cH.
\end{equation}
Let $P_n$ be the projection from $\cH$ onto $\cH_n$ and let $V_n$ be the restriction of $P_nS_m$ on $\cH_n$. That is,
\begin{equation*}
V_n:=P_nS_m\mid_{\cH_n}.
\end{equation*}
It was shown in \cite{Casazza2000} that for any $n\in \bN$, there always  exists a large enough $m = m(n) \in \bN$ depending on $n$ such that $V_n$ is invertible in $\cH_n$ and for all $f\in \cH$
\begin{equation}\label{generalmethod}
V_n^{-1}P_n f \rightarrow S^{-1}f, \quad \mbox{as } n\rightarrow \infty.  
\end{equation}
However, the {\em rate} of convergence for (\ref{generalmethod}), which we will in sequel refer to as the Casazza-Christensen method, was not discussed.

To establish the convergence properties of (\ref{generalmethod})  for intrinsically localized frames, \eqref{self-localization}, we observe that
\begin{equation}
\label{errorequation}
\| S^{-1}f - V_n^{-1}P_n f\| \leq  \|S^{-1}f  - P_n S^{-1} f\| + \|P_n S^{-1} f - V_n^{-1}P_nS_m S^{-1} f \|+ \| V_n^{-1}P_nS_m S^{-1} f - V_n^{-1}P_n f\|.
\end{equation}
Since  $V_n^{-1}P_nS_m g=g$ for $g\in \cH_n$,  we have
\begin{equation*}
\|P_n S^{-1} f - V_n^{-1}P_nS_m S^{-1} f \| = \|V_n^{-1}P_nS_m(P_n S^{-1}f -S^{-1}f)  \|
\end{equation*}
for the second term on the right hand side of (\ref{errorequation}).  Also,  $P_n S^{-1} f \in \cH_n$, $V_n^{-1}P_nS_m P_n S^{-1} f =P_n S^{-1} f$ implies that the third terms on the right hand side of (\ref{errorequation}) can be rewritten as
\begin{equation*}
\| V_n^{-1}P_nS_m S^{-1} f - V_n^{-1}P_n f\| = \|V_n^{-1}P_n (S_m - S)S^{-1}f \|.
\end{equation*}
It therefore follows that
\begin{eqnarray}\label{error-decomposition-inv}
\nonumber &&\| S^{-1}f - V_n^{-1}P_n f\|\\
& \leq&  \|S^{-1}f  - P_n S^{-1} f\| + \|V_n^{-1}P_nS_m(S^{-1}f -P_n S^{-1}f)  \|+  \|V_n^{-1}P_n (S- S_m)S^{-1}f \|.
\end{eqnarray}
We will estimate the three error terms on the right hand side of the above inequality separately.  As it turns out, the self-localization property,
(\ref{self-localization}), is fundamental in analyzing convergence.

We begin with an estimate of the first term $\|S^{-1}f - P_n S^{-1} f\| $. To this end, we impose the following assumption on the decay of frame coefficients $\langle f, \psi_j\rangle$:
\begin{equation}\label{framecoeff}
\left|\langle f, \psi_j\rangle\right| \leq c j^{-s}, \quad c>0, j\in \bN.
\end{equation}
We also recall that the dual frame, (\ref{dualframe}) has the same localization property as the original frame, \cite{Fornasier2005}.  Hence \eqref{self-localization} implies that there exists a positive constant $c$ such that
\begin{equation}\label{duallocalization}
\left|\langle \tpsi_j, \tpsi_l \rangle\right| \leq c (1+ |j-l|)^{-s}, \quad  j,l\in \bN.
\end{equation}

To estimate $\| S^{-1}f - P_n S^{-1} f\|$, we will first need the results of two lemmas.
The first lemma is a result for the decay of the inner product of $f$ with the dual frame $\{\tpsi_j\}_{j=1}^\infty$ of $\{\psi_j\}_{j=1}^\infty$: 
\begin{lem}\label{lem-dualcoeff}
If assumptions \eqref{self-localization} and \eqref{framecoeff} hold, then there exists a positive constant $c$ such that
\begin{equation*}
\left|\langle f, \tpsi_j\rangle\right| \leq c j^{-s}, \quad j\in \bN,
\end{equation*}
where $\tpsi_j$ is given in (\ref{dualframe}).
\end{lem}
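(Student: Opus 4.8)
The plan is to exploit the frame reconstruction formula \eqref{framesum} together with the two decay hypotheses. Writing $f$ in terms of the dual frame, $f = \sum_{l=1}^\infty \langle f, \psi_l \rangle \tpsi_l$, and pairing with $\tpsi_j$, continuity of the inner product lets me interchange the norm-convergent sum with $\langle \cdot, \tpsi_j \rangle$ to obtain
\[
\langle f, \tpsi_j \rangle = \sum_{l=1}^\infty \langle f, \psi_l \rangle \langle \tpsi_l, \tpsi_j \rangle .
\]
This reduces the estimate to controlling a weighted sum of the dual inner products, for which the relevant decay is already available.

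Next I would apply the triangle inequality together with the frame-coefficient decay \eqref{framecoeff} and the dual-localization bound \eqref{duallocalization} to get
\[
|\langle f, \tpsi_j \rangle| \leq \sum_{l=1}^\infty |\langle f, \psi_l \rangle| \, |\langle \tpsi_l, \tpsi_j \rangle| \leq c \sum_{l=1}^\infty l^{-s} (1+|l-j|)^{-s} .
\]
Everything then comes down to the convolution-type estimate $\sum_{l=1}^\infty l^{-s}(1+|l-j|)^{-s} \leq C j^{-s}$ for a constant $C$ independent of $j$, under the standing assumption $s>1$.

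To establish this I would split the sum into the three ranges $1 \leq l \leq j/2$, $j/2 < l < 2j$, and $l \geq 2j$. On the first range $|l-j| \geq j/2$, so the second factor is $O(j^{-s})$ while $\sum_l l^{-s}$ converges; on the middle range $l^{-s} = O(j^{-s})$ while $\sum_l (1+|l-j|)^{-s}$ is bounded by the convergent series $\sum_k (1+|k|)^{-s}$; and on the tail range $|l-j| \geq l/2$, so the summand is $O(l^{-2s})$ and $\sum_{l \geq 2j} l^{-2s} = O(j^{1-2s})$. Adding the three contributions yields the claimed $O(j^{-s})$ bound, and absorbing all constants into a single $c$ finishes the argument.

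The only nontrivial point, and hence the main obstacle, is this convolution estimate; the three-way splitting renders each piece routine, but care is needed in the tail range to verify that $j^{1-2s}$ is dominated by $j^{-s}$. This is precisely where $s>1$ enters, since it gives $2s-1 \geq s$ and therefore $j^{-(2s-1)} \leq j^{-s}$ for $j \geq 1$; the estimate would fail for $s \leq 1$, which is consistent with the paper's later need for a different approach in the weakly localized regime. The interchange of summation and inner product is justified by the norm convergence of the frame expansion and continuity of $\langle \cdot, \tpsi_j \rangle$.
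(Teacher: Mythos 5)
Your proposal is correct and follows essentially the same route as the paper: expand $f$ via the dual frame, pair with $\tpsi_j$, invoke \eqref{framecoeff} and \eqref{duallocalization}, and reduce to the convolution estimate $\sum_{l} l^{-s}(1+|j-l|)^{-s} \leq C j^{-s}$. The only difference is that the paper cites this last estimate from a lemma of Gr\"ochenig, whereas you prove it directly by the three-range splitting, which is a valid and self-contained substitute.
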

\begin{proof}
It follows from the dual frame property, \cite{Christensen2003}, that $f=\sum_{l=1}^\infty \langle f, \psi_l\rangle \tpsi_l$. By direct calculation, we have for any $j\in \bN$ that
\begin{equation*}
\left|\langle f, \tpsi_j\rangle\right|= \left| \sum_{l=1}^\infty \langle f, \psi_l\rangle \langle \tpsi_l, \tpsi_j\rangle\right|.
\end{equation*}
By \eqref{self-localization} and \eqref{framecoeff}, there exists a positive constant $c$ such that for all $j\in \bN$
\begin{equation*}
\left|\langle f, \tpsi_j\rangle\right|\leq c \sum_{l=1}^\infty l^{-s}(1+|j-l|)^{-s}.
\end{equation*}
The summation in the above inequality is bounded by a multiple of $j^{-s}$ according to a lemma in \cite{Grochenig2004},
which finishes the proof.
\end{proof}

The second lemma estimates $\|f - P_n f \|$ when $f$ satisfies \eqref{framecoeff} and $P_n$ is the projection from $H$ to $\cH_n$.
\begin{lem}\label{lem-Pn}
If assumptions \eqref{self-localization} and \eqref{framecoeff} hold, then there exists a positive constant $c$ such that
\begin{equation*}
\|f-P_n f \| \leq c n^{-(s-1/2)}.
\end{equation*}
\end{lem}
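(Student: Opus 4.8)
The plan is to exploit the fact that $P_n$ is the orthogonal projection onto $\cH_n$, so that $P_n f$ is the \emph{best} approximation to $f$ from $\cH_n$ in the norm of $\cH$. Consequently, for \emph{any} choice of $g\in\cH_n$ one has the elementary bound $\|f-P_n f\|\le\|f-g\|$. This frees me from analyzing the (generally complicated) action of $P_n$ directly and lets me instead pick a convenient explicit comparison element whose error I can control by hand.

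A natural choice is the truncation of the dual-frame expansion. Writing the reconstruction formula in the equivalent form $f=\sum_{l=1}^\infty\langle f,\tpsi_l\rangle\psi_l$ (valid since $\tpsi_l=S^{-1}\psi_l$ and $S^{-1}$ is self-adjoint), I would set $g:=\sum_{l=1}^n\langle f,\tpsi_l\rangle\psi_l\in\cH_n$. Then
\begin{equation*}
\|f-P_n f\|\le\|f-g\|=\Bigl\|\sum_{l=n+1}^\infty\langle f,\tpsi_l\rangle\psi_l\Bigr\|.
\end{equation*}
To estimate this tail I would invoke the upper frame bound: since $\{\psi_l\}_{l=1}^\infty$ is a frame with bound $B$, its synthesis operator is bounded by $\sqrt{B}$ on $\ell^2$, so
\begin{equation*}
\Bigl\|\sum_{l=n+1}^\infty\langle f,\tpsi_l\rangle\psi_l\Bigr\|\le\sqrt{B}\,\Bigl(\sum_{l=n+1}^\infty\left|\langle f,\tpsi_l\rangle\right|^2\Bigr)^{1/2}.
\end{equation*}

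At this point Lemma \ref{lem-dualcoeff} supplies the decay $\left|\langle f,\tpsi_l\rangle\right|\le c\,l^{-s}$, reducing the problem to the purely arithmetic tail estimate $\sum_{l>n}l^{-2s}$, which for $s>1/2$ is bounded by a multiple of $\int_n^\infty x^{-2s}\,dx=n^{-(2s-1)}/(2s-1)$. Taking square roots then produces exactly the stated rate $n^{-(s-1/2)}$.

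The substantive work is essentially already packaged inside Lemma \ref{lem-dualcoeff}, so I expect no serious technical obstacle in this argument; the crux is conceptual rather than computational, namely recognizing the best-approximation shortcut so that the synthesis/Bessel inequality (the upper frame bound) can be applied to the tail coefficient sequence. The one point deserving care is the integrability threshold: the tail sum converges and yields the claimed exponent only for $s>1/2$, which is comfortably guaranteed by the self-localization hypothesis \eqref{self-localization} with $s>1$. I note also that self-localization itself enters the estimate only indirectly, through Lemma \ref{lem-dualcoeff} (and the dual-localization \eqref{duallocalization}); the final step depends solely on the coefficient decay and the upper frame bound.
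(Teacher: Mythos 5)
Your proof is correct, and the first half (choosing the comparison element $T_nf=\sum_{l=1}^n\langle f,\tpsi_l\rangle\psi_l$ and invoking the best-approximation property of the orthogonal projection) is exactly what the paper does. Where you genuinely diverge is in estimating the tail $\bigl\|\sum_{l>n}\langle f,\tpsi_l\rangle\psi_l\bigr\|$: the paper expands the square as a double sum $\sum_{j,l>n}\langle f,\tpsi_j\rangle\langle f,\tpsi_l\rangle\langle\psi_j,\psi_l\rangle$, inserts the self-localization bound \eqref{self-localization} on the Gram entries $\langle\psi_j,\psi_l\rangle$ together with the dual-coefficient decay, and then carries out a two-index summation estimate; you instead apply the upper frame bound, i.e.\ the boundedness of the synthesis operator by $\sqrt{B}$, which collapses the problem to the single-index tail $\sum_{l>n}l^{-2s}$. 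Your route is shorter and more elementary, uses the self-localization of $\{\psi_j\}$ only indirectly (through Lemma \ref{lem-dualcoeff}), and needs only $s>1/2$ for the final summation, whereas the paper's inner-sum bound $\sum_{l\ge j}(1+l-j)^{-s}\le\frac{1}{s-1}$ requires $s>1$ at that step. The one thing the paper's heavier computation buys is reusability: the explicit double-sum bound $\sum_{j,l>n}j^{-s}l^{-s}(1+|j-l|)^{-s}\le\frac{1}{(s-1)(2s-1)}n^{-(2s-1)}$ is cited again verbatim in the proof of Proposition \ref{prop-3rderror}, where the synthesis-operator shortcut would not apply in the same form since the relevant quantity there is $\|(S-S_m)S^{-1}f\|$ rather than a plain synthesis of a tail coefficient sequence. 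As a self-contained proof of this lemma, however, your argument is complete and arguably cleaner.
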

\begin{proof}
We define $T_nf:=\sum_{j=1}^n \langle f, \tpsi_j\rangle \psi_j$. Clearly $T_nf\in \cH_n$. Since $P_n$ is the projection from $H$ to $\cH_n$, we have 
\begin{equation*}
\|f-P_n f \|  \leq \|f-T_n f \|. 
\end{equation*}
It suffices to show $\|f-T_n f \|\leq c n^{-(s-1/2)}$ for some positive constant $c$, 
which we show by direct calculation. Since $f=\sum_{l=1}^\infty \langle f, \tpsi_l\rangle \psi_l$, we have that
\begin{equation*}
\|f-T_n f \|^2 = \left\| \sum_{l=n+1}^\infty \langle f, \tpsi_l\rangle \psi_l \right\|^2 = \sum_{j,l=n+1}^\infty \langle f, \tpsi_j\rangle \langle f, \tpsi_l\rangle \langle \psi_j, \psi_l\rangle.
\end{equation*}
It follows from Lemma \ref{lem-dualcoeff} and \eqref{self-localization} that there exists a positive constant $c$ such that
\begin{equation*}
\|f-T_n f \|^2  \leq c \sum_{j,l=n+1}^\infty j^{-s}l^{-s}(1+|j-l|)^{-s}.
\end{equation*}
Note that $j$ and $l$ are symmetric in the above inequality, and therefore
\begin{equation*}
\|f-T_n f \|^2  \leq 2c \sum_{j=n+1}^\infty \left[ j^{-s} \sum_{l=j}^\infty l^{-s}(1+|j-l|)^{-s}\right].
\end{equation*}
Since $\sum_{l=j}^\infty l^{-s}(1+|j-l|)^{-s} \leq j^{-s} \sum_{l=j}^\infty (1+l-j)^{-s} \leq \frac{1}{s-1}j^{-s} $, we have
\begin{equation*}
\|f-T_n f \|^2  \leq \frac{2c}{s-1} \sum_{j=n+1}^\infty j^{-2s} \leq \frac{2c}{s-1} \frac{1}{2s-1}n^{-(2s-1)},
\end{equation*}
yielding the desired result.
\end{proof}

We are now ready to estimate the first term $\|S^{-1}f - P_n S^{-1} f\| $ in \eqref{error-decomposition-inv}:
\begin{prop}\label{prop-Pn}
Assume \eqref{self-localization} and \eqref{framecoeff} hold. Then there exists a positive constant $c$ such that
\begin{equation}
\label{prop_Pn_eq}
\|S^{-1}f - P_n S^{-1} f \| \leq c n^{-(s-1/2)}.
\end{equation}
\end{prop}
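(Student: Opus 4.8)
The plan is to reduce the proposition to the two lemmas already established by applying Lemma \ref{lem-Pn} not to $f$ directly, but to the function $g := S^{-1}f$. Lemma \ref{lem-Pn} asserts that \emph{any} element whose frame coefficients decay like $j^{-s}$ is approximated by its projection $P_n$ at rate $n^{-(s-1/2)}$; so the entire task is to verify that $g = S^{-1}f$ itself satisfies the frame-coefficient decay hypothesis \eqref{framecoeff}, namely that $|\langle g, \psi_j\rangle| \le c\, j^{-s}$ for all $j \in \bN$.

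The key observation is that the frame operator $S$ is self-adjoint (from its defining sum $Sf = \sum_j \langle f, \psi_j\rangle \psi_j$ one checks directly that $\langle Sf, h\rangle = \langle f, Sh\rangle$), and hence so is $S^{-1}$. Consequently, for every $j$,
\[
\langle S^{-1}f, \psi_j\rangle = \langle f, S^{-1}\psi_j\rangle = \langle f, \tpsi_j\rangle,
\]
where the last equality is just the definition \eqref{dualframe} of the dual frame $\tpsi_j = S^{-1}\psi_j$. This identity is the crux of the argument: it rewrites the frame coefficients of the unknown function $S^{-1}f$ as the inner products $\langle f, \tpsi_j\rangle$, whose decay has already been analyzed.

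With this in hand, Lemma \ref{lem-dualcoeff} --- which invokes precisely the standing hypotheses \eqref{self-localization} and \eqref{framecoeff} --- furnishes a constant $c>0$ with
\[
\left|\langle S^{-1}f, \psi_j\rangle\right| = \left|\langle f, \tpsi_j\rangle\right| \le c\, j^{-s}, \quad j \in \bN.
\]
Thus $g = S^{-1}f$ satisfies the decay hypothesis \eqref{framecoeff}. Applying Lemma \ref{lem-Pn} to $g$ (the self-localization \eqref{self-localization} being assumed throughout) then gives
\[
\|S^{-1}f - P_n S^{-1}f\| = \|g - P_n g\| \le c\, n^{-(s-1/2)},
\]
which is exactly \eqref{prop_Pn_eq}.

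I do not anticipate a genuine obstacle: the whole content is the self-adjointness identity $\langle S^{-1}f, \psi_j\rangle = \langle f, \tpsi_j\rangle$, which chains the two previously established lemmas together. The only point meriting a word of care is that Lemma \ref{lem-Pn} must have been proved for an \emph{arbitrary} element satisfying \eqref{framecoeff} --- which it was, through the auxiliary operator $T_n$ --- so that it legitimately applies to $g = S^{-1}f$ and not merely to the original data $f$.
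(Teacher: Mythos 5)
Your proof is correct and follows essentially the same route as the paper: both use the self-adjointness identity $\langle S^{-1}f, \psi_j\rangle = \langle f, S^{-1}\psi_j\rangle = \langle f, \tilde{\psi}_j\rangle$ to transfer the decay hypothesis to $S^{-1}f$ via Lemma \ref{lem-dualcoeff}, and then apply Lemma \ref{lem-Pn}. No discrepancies to report.
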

\begin{proof}
By Lemma \ref{lem-dualcoeff}, there exists a positive constant $c$ such that $\left|\langle f, \tpsi_j\rangle\right| \leq c j^{-s}$ for all $j\in \bN$. Since $\langle S^{-1}f, \psi_j\rangle = \langle f, S^{-1}\psi_j\rangle = \langle f, \tpsi_j\rangle$, the function $S^{-1}f$ also satisfies \eqref{framecoeff}. This combined with Lemma \ref{lem-Pn} implies the desired result.
\end{proof}

We next estimate the second error term $\|V_n^{-1}P_nS_m(S^{-1}f -P_n S^{-1}f)  \|$ in \eqref{error-decomposition-inv}. Using (\ref{prop_Pn_eq}) and the fact that  $\|P_n\|\leq 1$ and $\| S_m\|\leq \|S\| \leq B$, we see that we only must estimate $\| V_n^{-1}\|$. In fact, we shall show that we can choose $m = m(n)$ such that $\| V_n^{-1}\|$ is uniformly bounded for all $n\in \bN$. To this end, we introduce the following constant 
\begin{equation}\label{Amn}
A_{m,n}:=\frac{c_0^2}{(2s-1)\lambda_{\min}(\mPsi_n)} n (m-n)^{-(2s-1)},
\end{equation}
where $\mPsi_n:=\left[\langle \psi_j, \psi_l\rangle \right]_{j,l=1}^n$ and $\lambda_{\min}(\mPsi_n)$ is its smallest eigenvalue. We assume here that the matrix $\mPsi_n$ is invertible. Otherwise, we can use its invertible principle sub-matrix instead and the same analysis can be carried over. Note that $m$ is always chosen to be greater than $n$. We first bound $\| V_n^{-1}\|$ for $m$.

\begin{lem}\label{lem-Vn}
Suppose \eqref{self-localization} holds. If $A_{m,n}<A$, then $\{P_n\psi_j\}_{j=1}^m$ is a frame for $\cH_n$ with frame bounds $A-A_{m,n}, B$ and frame operator $V_n$. Moreover,
\begin{equation*}
\|V_n^{-1}\| \leq \frac{1}{A-A_{m,n}}.
\end{equation*}
\end{lem}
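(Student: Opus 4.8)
The plan is to recognize $V_n$ as the frame operator of the finite family $\{P_n\psi_j\}_{j=1}^m$ acting on $\cH_n$, to establish the two frame bounds directly, and then to read off the estimate on $\|V_n^{-1}\|$ from the lower bound via the standard fact that a frame operator dominates its lower bound times the identity.

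First I would record the frame-operator identity. For $g\in\cH_n$, self-adjointness of the orthogonal projection together with $P_n g = g$ gives $\langle g, P_n\psi_j\rangle = \langle P_n g,\psi_j\rangle = \langle g,\psi_j\rangle$. Hence $\sum_{j=1}^m \langle g, P_n\psi_j\rangle P_n\psi_j = P_n\big(\sum_{j=1}^m\langle g,\psi_j\rangle\psi_j\big) = P_nS_m g = V_n g$, confirming that $V_n$ is indeed the frame operator of $\{P_n\psi_j\}_{j=1}^m$ on $\cH_n$, and also that $\sum_{j=1}^m|\langle g,P_n\psi_j\rangle|^2 = \sum_{j=1}^m|\langle g,\psi_j\rangle|^2$. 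The upper bound is then immediate, since this finite sum is dominated by $\sum_{j=1}^\infty|\langle g,\psi_j\rangle|^2\le B\|g\|^2$. For the lower bound I would split $\sum_{j=1}^m|\langle g,\psi_j\rangle|^2 = \sum_{j=1}^\infty|\langle g,\psi_j\rangle|^2 - \sum_{j=m+1}^\infty|\langle g,\psi_j\rangle|^2 \ge A\|g\|^2 - \sum_{j=m+1}^\infty|\langle g,\psi_j\rangle|^2$, so that the whole question reduces to showing the tail is at most $A_{m,n}\|g\|^2$.

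To control the tail, write $g=\sum_{k=1}^n a_k\psi_k$ with coefficient vector $\va=(a_1,\dots,a_n)$. Since $\mPsi_n$ is the Gram matrix of $\{\psi_k\}_{k=1}^n$, one has $\|g\|^2 \ge \lambda_{\min}(\mPsi_n)\|\va\|^2$, so $\|\va\|^2 \le \lambda_{\min}(\mPsi_n)^{-1}\|g\|^2$. Applying Cauchy--Schwarz to $\langle g,\psi_j\rangle = \sum_{k=1}^n a_k\langle\psi_k,\psi_j\rangle$ and then the self-localization bound \eqref{self-localization} yields
\[
\sum_{j=m+1}^\infty|\langle g,\psi_j\rangle|^2 \;\le\; \|\va\|^2\, c_0^2 \sum_{j=m+1}^\infty\sum_{k=1}^n(1+|j-k|)^{-2s}.
\]

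The main obstacle is this double-sum tail estimate. Because $j>m\ge n\ge k$, every term is $(1+j-k)^{-2s}$; summing over $j$ by comparison with $\int_{m-k}^\infty (1+x)^{-2s}\,dx = \tfrac{1}{2s-1}(1+m-k)^{-(2s-1)}$ and using $1+m-k \ge m-n$ for all $k\le n$ (valid since $s>1$, so $2s-1>0$) gives a per-$k$ bound of $\tfrac{1}{2s-1}(m-n)^{-(2s-1)}$, hence a total of $\tfrac{n}{2s-1}(m-n)^{-(2s-1)}$ after summing over the $n$ indices $k$. Combining this with the eigenvalue estimate produces exactly $A_{m,n}\|g\|^2$, so the lower frame bound is $A-A_{m,n}$, which is positive precisely under the hypothesis $A_{m,n}<A$. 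Finally, since $V_n$ is the positive self-adjoint frame operator of a frame with lower bound $A-A_{m,n}$, we have $V_n\ge (A-A_{m,n})I$ on $\cH_n$, and therefore $\|V_n^{-1}\|\le (A-A_{m,n})^{-1}$, as claimed.
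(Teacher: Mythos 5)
Your proof is correct and follows essentially the same route as the paper's: reduce to the frame inequality for $\{P_n\psi_j\}_{j=1}^m$ on $\cH_n$ via $P_ng=g$, bound the tail $\sum_{j>m}|\langle g,\psi_j\rangle|^2$ by expanding $g$ in $\{\psi_k\}_{k=1}^n$, applying Cauchy--Schwarz and the self-localization estimate, and converting $\|\va\|^2$ to $\|g\|^2$ through $\lambda_{\min}(\mPsi_n)$. The only cosmetic difference is the order in which you sum the double tail (over $j$ first, then $k$), which yields the same bound $\tfrac{n}{2s-1}(m-n)^{-(2s-1)}$.
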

\begin{proof}
We proceed by establishing the frame condition, \eqref{framebound}, by direct calculation. For any $g\in \cH_n$, we have $P_ng=g$, which implies that
\begin{equation*}
\sum_{j=1}^m |\langle g, P_n \psi_j \rangle|^2 = \sum_{j=1}^m |\langle P_n g,  \psi_j \rangle|^2 = \sum_{j=1}^m |\langle g, \psi_j \rangle|^2.
\end{equation*}
To see the upper bound, we observe that
\begin{equation*}
\sum_{j=1}^m |\langle g, \psi_j \rangle|^2 \leq \sum_{j=1}^\infty |\langle g, \psi_j \rangle|^2  \leq B \| g\|^2.
\end{equation*}
To estimate the lower bound, we first approximate $\sum_{j=m+1}^\infty |\langle g, P_n \psi_j \rangle|^2$. Since $g\in \cH_n$, there exists some $\va\in \bR^n$ such that $g=\sum_{j=1}^n a_j\psi_j$. It follows that
\begin{equation*}
\sum_{j=m+1}^\infty |\langle g, \psi_j \rangle|^2 = \sum_{j=m+1}^\infty \left|\sum_{l=1}^n a_l \langle \psi_l, \psi_j \rangle \right|^2 \leq   \| \va\|^2\sum_{j=m+1}^\infty  \left|\sum_{l=1}^n \langle \psi_l, \psi_j \rangle \right|^2.
\end{equation*}
Applying \eqref{self-localization} to the last term in the above inequality yields 
\begin{equation*}
\sum_{j=m+1}^\infty |\langle g, \psi_j \rangle|^2 \leq c_0^2 \| \va\|^2\sum_{j=m+1}^\infty  \sum_{l=1}^n (1+|j-l|)^{-2s}.
\end{equation*}
Since $m>n$, when $j\geq m+1$,  $\sum_{l=1}^n (1+|j-l|)^{-2s} \leq n (1+ j-n)^{-2s}$. It follows that
\begin{equation*}
\sum_{j=m+1}^\infty |\langle g, \psi_j \rangle|^2 \leq c_0^2 \| \va\|^2\sum_{j=m+1}^\infty n (1+j-n)^{-2s} \leq \frac{c_0^2 \| \va\|^2}{2s-1} n(m-n)^{2s-1}.
\end{equation*}
Note that $\| g\|^2 = \|\sum_{j=1}^n a_j\psi_j \|^2 = \va^T \mPsi_n \va \geq \lambda_{\min}(\mPsi_n) \|\va\|^2$. Substituting this into the above inequality yields 
\begin{equation*}
\sum_{j=m+1}^\infty |\langle g, \psi_j \rangle|^2 \leq A_{m,n} \|g \|^2.
\end{equation*}
Consequently, 
\begin{equation*}
\sum_{j=1}^m |\langle g, \psi_j \rangle|^2 = \sum_{j=1}^\infty |\langle g, \psi_j \rangle|^2 - \sum_{j=m+1}^\infty |\langle g, \psi_j \rangle|^2 \geq (A-A_{m,n}) \|g \|^2,
\end{equation*}
i.e., $A-A_{m,n}$ is the lower frame bound of the frame $\{P_n\psi_j\}_{j=1}^m$ for $\cH_n$ if $A_{m,n} <A$.

To show $V_n$ is the associated frame operator, we observe that for $g\in \cH_n$
\begin{equation*}
\sum_{j=1}^m \langle g, P_n \psi_j \rangle P_n\psi_j = \sum_{j=1}^m \langle P_n g, \psi_j \rangle P_n\psi_j = \sum_{j=1}^m \langle g, \psi_j \rangle P_n\psi_j= P_nS_m g = V_n g.
\end{equation*}
The bound of $\|V_n^{-1}\| $ follows immediately.
\end{proof}

We next give an estimate of $\|V_n^{-1}P_nS_m(S^{-1}f -P_n S^{-1}f)  \|$ by choosing $m$ such that $\|V_n^{-1}\| $ is uniformly bounded for all $n\in \bN$.

\begin{prop}\label{prop-2nderror}
Suppose assumptions \eqref{self-localization} and \eqref{framecoeff} hold. If we let
\begin{equation}\label{choose-m}
m = n + \left[\frac{2 n}{A (2s-1) \lambda_{\min}(\mPsi_n)}\right]^{\frac{1}{2s-1}},
\end{equation}
then $\|V_n^{-1}\|\leq 2/A$ and there exists a positive constant $c$ such that
\begin{equation*}
\|V_n^{-1}P_nS_m(S^{-1}f -P_n S^{-1}f)  \| \leq c n^{-(s-1/2)}.
\end{equation*}
\end{prop}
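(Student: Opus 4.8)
The plan is to separate the proposition into its two claims and settle each by assembling estimates already in place. For the uniform bound $\|V_n^{-1}\| \le 2/A$, the strategy is to feed the prescribed $m$ from \eqref{choose-m} into Lemma \ref{lem-Vn}; the only thing to check is that this choice forces $A_{m,n} \le A/2$. I would substitute \eqref{choose-m} directly into the definition \eqref{Amn}. Writing $m - n = \left[\tfrac{2n}{A(2s-1)\lambda_{\min}(\mPsi_n)}\right]^{1/(2s-1)}$ gives $(m-n)^{-(2s-1)} = \tfrac{A(2s-1)\lambda_{\min}(\mPsi_n)}{2n}$, and inserting this into $A_{m,n}$ cancels the factors $n$, $(2s-1)$, and $\lambda_{\min}(\mPsi_n)$, leaving $A_{m,n} = c_0^2 A/2$, a fixed fraction of $A$; after the normalization $c_0 \le 1$ (or a harmless adjustment of the numerical constant in \eqref{choose-m}) this is at most $A/2$. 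In particular $A_{m,n} < A$, so Lemma \ref{lem-Vn} applies and yields $\|V_n^{-1}\| \le \tfrac{1}{A - A_{m,n}} \le \tfrac{2}{A}$, uniformly in $n$.

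For the second claim I would bound $\|V_n^{-1}P_nS_m(S^{-1}f - P_nS^{-1}f)\|$ factor by factor in operator norm. Using $\|V_n^{-1}\| \le 2/A$ from the first part, $\|P_n\| \le 1$ (orthogonal projection), and $\|S_m\| \le \|S\| \le B$ (a partial sum of the frame operator), I obtain
\begin{equation*}
\|V_n^{-1}P_nS_m(S^{-1}f - P_nS^{-1}f)\| \le \frac{2B}{A}\,\|S^{-1}f - P_nS^{-1}f\|.
\end{equation*}
The surviving factor is exactly the quantity controlled by Proposition \ref{prop-Pn}, namely $\|S^{-1}f - P_nS^{-1}f\| \le c\,n^{-(s-1/2)}$. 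Chaining the two bounds gives the advertised rate with constant $2Bc/A$.

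Thus the real content is front-loaded: the work is the choice \eqref{choose-m} of $m = m(n)$, engineered precisely so that the perturbation $A_{m,n}$ governing the lower frame bound of $\{P_n\psi_j\}_{j=1}^m$ in Lemma \ref{lem-Vn} stays below $A/2$ for every $n$. Once that is secured, the remainder is a routine chaining of operator-norm inequalities with Proposition \ref{prop-Pn}, and I expect no genuine obstacle. The one point deserving a remark is that $m > n$ must hold for both Lemma \ref{lem-Vn} and the tail estimate behind \eqref{Amn}; this is immediate since the bracket in \eqref{choose-m} is strictly positive. I would also note that because $\lambda_{\min}(\mPsi_n)$ may decay in $n$, the gap $m - n$ may need to grow with $n$, but this affects only the volume of data required, not the uniform bound $\|V_n^{-1}\| \le 2/A$ or the stated convergence rate.
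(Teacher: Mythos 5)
Your proof is correct and follows essentially the same route as the paper's: substitute \eqref{choose-m} into \eqref{Amn}, invoke Lemma \ref{lem-Vn} for the uniform bound $\|V_n^{-1}\|\le 2/A$, and then chain $\|V_n^{-1}\|\,\|P_n\|\,\|S_m\|$ with Proposition \ref{prop-Pn}. Your observation that the substitution actually yields $A_{m,n}=c_0^2A/2$ rather than $A/2$ is a fair catch --- the bracket in \eqref{choose-m} is evidently missing a factor $c_0^2$ (compare \eqref{choose-m-nonlocal} and \eqref{choose-m-reconstruction}, which carry the analogous $c_1^2$) --- and your fix of adjusting the constant is exactly what is intended.
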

\begin{proof}
The bound of $\|V_n^{-1}\|$ follows from substituting \eqref{choose-m} into \eqref{Amn} and applying Lemma \ref{lem-Vn}. Moreover,
\begin{equation*}
\|V_n^{-1}P_nS_m(S^{-1}f -P_n S^{-1}f)  \| \leq \|V_n^{-1}\| \|P_n\| \| S_m\| \|S^{-1}f -P_n S^{-1}f \| \leq \frac{2B}{A} \|S^{-1}f -P_n S^{-1}f \| .
\end{equation*} 
This combined with Proposition \ref{prop-Pn} yields the desired result.
\end{proof}

It remains to estimate the last term $\|V_n^{-1}P_n (S- S_m)S^{-1}f \|$ in \eqref{error-decomposition-inv}. We have the following result.
\begin{prop}\label{prop-3rderror}
Suppose assumptions \eqref{self-localization} and \eqref{framecoeff} hold. If we choose $m$ as in \eqref{choose-m}, then there exists a positive constant $c$ such that
\begin{equation*}
\|V_n^{-1}P_n (S- S_m)S^{-1}f \|\leq c n^{-(s-1/2)}.
\end{equation*}
\end{prop}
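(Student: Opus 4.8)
The plan is to reduce this third term to a tail estimate of exactly the type already handled in Lemma \ref{lem-Pn}. First I would set $g := S^{-1}f$ and recall from the proof of Proposition \ref{prop-Pn} that $g$ satisfies the decay assumption \eqref{framecoeff}, namely $\left|\langle g, \psi_j\rangle\right| = \left|\langle f, \tpsi_j\rangle\right| \leq c j^{-s}$. Since $S_m$ agrees with $S$ on the first $m$ terms of the frame expansion, the operator $S-S_m$ merely discards the high-index contributions, leaving
\begin{equation*}
(S-S_m)g = \sum_{j=m+1}^\infty \langle g, \psi_j\rangle \psi_j.
\end{equation*}

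Next I would peel off the operator norms. Using the uniform bound $\|V_n^{-1}\| \leq 2/A$ established in Proposition \ref{prop-2nderror} (valid for the choice of $m$ in \eqref{choose-m}) together with $\|P_n\|\leq 1$, it follows that
\begin{equation*}
\|V_n^{-1}P_n (S- S_m)g\| \leq \frac{2}{A}\,\|(S-S_m)g\|,
\end{equation*}
so the task is reduced to bounding $\|(S-S_m)g\|$.

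The core computation then mirrors Lemma \ref{lem-Pn} verbatim, only with the summation index starting at $m+1$ rather than $n+1$. Expanding the norm and invoking \eqref{framecoeff} for $g$ together with the self-localization \eqref{self-localization} gives
\begin{equation*}
\|(S-S_m)g\|^2 = \sum_{j,l=m+1}^\infty \langle g,\psi_j\rangle \langle g,\psi_l\rangle \langle \psi_j,\psi_l\rangle \leq c\sum_{j,l=m+1}^\infty j^{-s}l^{-s}(1+|j-l|)^{-s}.
\end{equation*}
Exploiting the symmetry in $j,l$ and the inner-sum estimate $\sum_{l=j}^\infty l^{-s}(1+|j-l|)^{-s}\leq \frac{1}{s-1}j^{-s}$ exactly as in Lemma \ref{lem-Pn}, this double sum is controlled by a multiple of $m^{-(2s-1)}$, whence $\|(S-S_m)g\|\leq c\, m^{-(s-1/2)}$. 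Finally, since $m>n$ and $s>1/2$, one has $m^{-(s-1/2)}\leq n^{-(s-1/2)}$, which combined with the previous step yields $\|V_n^{-1}P_n(S-S_m)S^{-1}f\|\leq c\, n^{-(s-1/2)}$.

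I do not anticipate a genuine obstacle here: the argument is in essence a second application of the tail bound from Lemma \ref{lem-Pn}, now applied to the tail beyond index $m$ rather than beyond index $n$. The only point requiring attention is bookkeeping: the natural estimate produces a decay rate expressed in terms of $m$, and one must invoke the relation $m>n$ to transfer it back to the desired rate in $n$. This transfer is harmless precisely because the exponent $-(s-1/2)$ is negative, so enlarging the truncation index only improves the bound.
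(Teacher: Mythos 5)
Your proposal is correct and follows essentially the same route as the paper: peel off $\|V_n^{-1}\|\leq 2/A$ and $\|P_n\|\leq 1$ using Proposition \ref{prop-2nderror}, expand $\|(S-S_m)S^{-1}f\|^2$ as a double tail sum, apply Lemma \ref{lem-dualcoeff} and \eqref{self-localization}, and reuse the summation bound from Lemma \ref{lem-Pn}. The only cosmetic difference is that you first obtain the rate in $m$ and then convert via $m>n$, whereas the paper bounds the tail sum directly by the Lemma \ref{lem-Pn} estimate at index $n$; both are valid.
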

\begin{proof}
By Proposition \ref{prop-2nderror}, $\|V_n^{-1}\|\leq 2/A$. Since $\|P_n\|\leq 1$, it suffices to show that  
\begin{equation}\label{prop-3rderror-1}
\|(S- S_m)S^{-1}f \|\leq c n^{-(s-1/2)}
\end{equation}
for some positive constant $c$. It follows from direct calculation that
\begin{equation*}
\|(S- S_m)S^{-1}f \|^2 = \sum_{j=m+1}^\infty \sum_{l=m+1}^\infty \langle S^{-1}f, \psi_j\rangle \langle S^{-1}f, \psi_l\rangle \langle \psi_j, \psi_l\rangle = \sum_{j=m+1}^\infty \sum_{l=m+1}^\infty \langle f, \tpsi_j\rangle \langle f, \tpsi_l\rangle \langle \psi_j, \psi_l\rangle.
\end{equation*} 
By assumptions \eqref{self-localization} and \eqref{framecoeff}, there exists a positive constant $c$ such that
\begin{equation*}
\|(S- S_m)S^{-1}f \|^2 \leq c \sum_{j=m+1}^\infty \sum_{l=m+1}^\infty j^{-s}l^{-s}(1+|j-l|)^{-s}.
\end{equation*} 
Note that we already show in Lemma \ref{lem-Pn} that the above summation term is bounded by $\frac{1}{(s-1)(2s-1)}n^{-(2s-1)}$, which implies \eqref{prop-3rderror-1}.
\end{proof}

We now summarize estimates of the three error terms in \eqref{error-decomposition-inv} to obtain an estimate for $\| S^{-1}f - V_n^{-1}P_n f\|$.
\begin{thm}\label{thm-inv}
Suppose assumptions \eqref{self-localization} and \eqref{framecoeff} hold. If we choose $m$ as in \eqref{choose-m}, then there exists a positive constant $c$ such that
\begin{equation*}
\| S^{-1}f - V_n^{-1}P_n f\| \leq c n^{-(s-1/2)}.
\end{equation*}
\end{thm}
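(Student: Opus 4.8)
The plan is to assemble the three propositions already established into the single error bound, using the triangle inequality that is encoded in the decomposition \eqref{error-decomposition-inv}. Since that inequality expresses $\| S^{-1}f - V_n^{-1}P_n f\|$ as a sum of exactly the three quantities controlled by Propositions \ref{prop-Pn}, \ref{prop-2nderror}, and \ref{prop-3rderror}, the work here reduces to invoking each bound and adding them.

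First I would recall \eqref{error-decomposition-inv}, namely
\[
\| S^{-1}f - V_n^{-1}P_n f\| \leq \|S^{-1}f - P_n S^{-1} f\| + \|V_n^{-1}P_nS_m(S^{-1}f -P_n S^{-1}f)\| + \|V_n^{-1}P_n (S- S_m)S^{-1}f\|.
\]
The first summand is at most $c_1 n^{-(s-1/2)}$ by Proposition \ref{prop-Pn}. The second and third summands both require the specific choice $m=m(n)$ from \eqref{choose-m}, which guarantees $\|V_n^{-1}\| \le 2/A$ uniformly in $n$; with that choice Proposition \ref{prop-2nderror} bounds the second summand by $c_2 n^{-(s-1/2)}$ and Proposition \ref{prop-3rderror} bounds the third by $c_3 n^{-(s-1/2)}$. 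Setting $c := c_1 + c_2 + c_3$ then yields the claimed estimate.

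The one point requiring care — and it is already handled inside the individual propositions rather than here — is that all three bounds must hold for one and the same $m=m(n)$. This is precisely why the choice \eqref{choose-m} is fixed before the three intermediate results are stated: it simultaneously forces $A_{m,n} < A$ (so that $V_n$ is invertible with the uniform control on $\|V_n^{-1}\|$ coming from Lemma \ref{lem-Vn}) and tames the tail $\|(S-S_m)S^{-1}f\|$ at rate $n^{-(s-1/2)}$. Because the three exponents coincide, no rate is lost in the sum, so there is no genuine obstacle at the level of this theorem; the substance of the argument lives in Lemmas \ref{lem-dualcoeff}--\ref{lem-Vn} and the three propositions. Were I proving the estimate from scratch, the hard part would instead be the uniform invertibility of $V_n$ together with the tail estimate, both of which are already secured above.
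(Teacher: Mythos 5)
Your proof is correct and matches the paper's own argument exactly: the paper likewise proves Theorem \ref{thm-inv} by substituting the estimates of Propositions \ref{prop-Pn}, \ref{prop-2nderror}, and \ref{prop-3rderror} into the decomposition \eqref{error-decomposition-inv}. Your additional remark that all three bounds hold for the single choice of $m$ in \eqref{choose-m} is a sensible point of care, but it does not change the route.
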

\begin{proof}
It follows immediately from substituting estimates in Propositions \ref{prop-Pn}, \ref{prop-2nderror}, and \ref{prop-3rderror} into the error decomposition \eqref{error-decomposition-inv}.
\end{proof}

We close this section with two remarks:
\begin{rem}\label{rem:remark1}
In \cite{Christensen2005}, the sampling frame is assumed to be localized with respect to an orthonormal basis  and $m$ is chosen to be ${\mathcal O}(n^{-\frac{s}{s-1}})$ to obtain the optimal convergence rate $n^{-(s-1)}$.
\end{rem}

\begin{rem}\label{rem:remark2}
When $\{\psi_j\}_{j=1}^\infty$ is a Riesz basis, $\lambda_{\min}(\mPsi_n)$ is uniformly bounded below for all $n\in \bN$. We see that $m$ in \eqref{choose-m} is ${\mathcal O}(n)$ and the optimal convergence rate is $n^{-(s-1/2)}$. Hence when the sampling frame is localized,  the Casazza-Christensen method yields better convergence properties than the finite section method, even when $m$, the number of given samples, is smaller.
\end{rem}

\section{Constructing $S^{-1}$ for General Frames}\label{section:non-localized}
We now consider the case when the sampling frame $\{\psi_j\}_{j=1}^\infty$ is {\em not} well-localized, that is, \eqref{self-localization} is satisfied only for some $s\in (\frac{1}{2}, 1]$. Theorem \ref{thm-inv} demonstrates that the Casazza-Christensen method has low order convergence for $s > 1$ and the convergence rate does not hold for $s \le 1$  As discussed in Section \ref{sec:introduction}, effective numerical frame approximation techniques rely upon the accurate and efficient  approximation of $S^{-1}$, and in a variety of applications, for frames that are not well-localized.  Hence we introduce a new method of approximating the inverse frame operator $S^{-1}$ with better convergence properties. Our method is similar to \eqref{generalmethod}, but uses a projection onto a different finite-dimensional subspace that is generated by a well-localized frame. To this end, we introduce the concept of an admissible frame $\{\phi_j\}_{j=1}^\infty$ for $\cH$ which is defined as:
\begin{defn}\label{def_admissible}
A frame $\{\phi_j\}_{j=1}^\infty$ is admissible with respect to 
a frame $\{\psi_i\}_{i=1}^\infty$ if 
\begin{enumerate}
\item It is intrinsically self localized 
\begin{equation}\label{goodlocalization}
\left|\langle \phi_j, \phi_l \rangle\right| \leq c_0 (1+ |j-l|)^{-t}, \quad c>0, j,l\in \bN,
\end{equation}
with a localization rate $t>1$, and
\item We have 
\begin{equation}\label{mixedlocalization}
\left|\langle \psi_j, \phi_l \rangle\right| \leq c_1 (1+ |j-l|)^{-s}, \quad c>0, s>0, j,l\in \bN.
\end{equation}
\end{enumerate}
\end{defn}
We remark that for a frame to be admissible, we do not need $s > 1$, and in fact later we show that  $s>\frac{1}{2}$  ensures the convergence of the inverse frame operator. We also assume $t \ge s$. Otherwise, we can always take $\phi_j = \psi_j$.  

We now introduce some notation. For $n\in \bN$, let $\cG_n:=\spn\{\phi_j: 1\leq j\leq n\}$ and $Q_n$ be the projection from $\cH$ to $\cG_n$. Note that $Q_nS_m$ is an operator from $\cH$ to $\cG_n$, and we denote its restriction on $\cG_n$ by $W_n:=Q_nS_m\mid_{\cG_n}$.   The following operator is used to approximate $S^{-1}$:
\begin{equation}\label{newmethod}
W_n^{-1} Q_n f \rightarrow S^{-1}f, \quad \mbox{as } n\rightarrow \infty,
\end{equation}
where we have assumed that 
$W_n$ is an invertible operator on $\cG_n$.  Later we discuss the conditions under which this assumption holds.  The difference between (\ref{newmethod}) and  \eqref{generalmethod} is that here we use $Q_n$, the projection onto the finite-dimensional subspace $\cG_n$ generated by the admissible frame $\{\phi_j: 1\leq j\leq n\}$, instead of $P_n$, the projection onto the finite-dimensional subspace $\cH_n$ generated by the sampling frame $\{\psi_j: 1\leq j\leq n\}$.   This regularization allows for a numerically stable and convergent approximation of the inverse frame operator, even when the sampling is not done using well-localized frames. We will show that the convergence rate of approximating the inverse frame operator is now $t-1/2$ instead of $s-1/2$. Practically, when the sampling frame has a small localization rate $s$, we would like to find a frame with a greater localization rate $t$ that is admissible with respect to the sampling frame. 

To estimate the approximation error $\|S^{-1} f - W_n^{-1} Q_n f\|$, we first give its symbolic decomposition. Clearly
\begin{equation*}
S^{-1} f - W_n^{-1} Q_n f = S^{-1}(f - Q_n f) + S^{-1}( S - W_n ) W_n^{-1} Q_n f, 
\end{equation*}
and by the frame condition, \eqref{framebound}, we have $\|S^{-1}\| \leq 1/A$. It therefore follows that
\begin{equation}\label{inv-error-decomposition}
\left\|S^{-1} f - W_n^{-1} Q_n f \right\| \leq \frac{1}{A} \left\|f - Q_n f \right\| + \frac{1}{A} \left\|( S - W_n ) W_n^{-1} Q_n f \right\|. 
\end{equation}

We first estimate  $\|f - Q_n f\|$: 
\begin{prop}\label{prop-1sterror-nonlocal}
Assume that $\{\phi_j\}_{j = 1}^\infty$ is an admissible frame and that
\begin{equation}\label{framecoeff2}
\left|\langle f, \phi_j\rangle\right| \leq c j^{-t}, \quad c>0, j\in \bN.
\end{equation}
Then there exists a positive constant $c$ such that
\begin{equation*}
\|f-Q_n f \| \leq c n^{-(t-1/2)}.
\end{equation*}
\end{prop}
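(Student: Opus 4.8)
The plan is to mirror the argument of Lemma~\ref{lem-Pn} essentially verbatim, replacing the sampling frame $\{\psi_j\}$ and its decay rate $s$ by the admissible frame $\{\phi_j\}$ and its (strictly larger than one) rate $t$. The role played there by the self-localization \eqref{self-localization} is now taken by the admissible self-localization \eqref{goodlocalization}, and the role of the coefficient decay \eqref{framecoeff} is taken by \eqref{framecoeff2}. Since \eqref{goodlocalization} forces $t>1$, every summation estimate used in Lemma~\ref{lem-Pn} remains valid with $t$ in place of $s$. Observe that the mixed localization \eqref{mixedlocalization} plays no part in this first estimate; only the self-localization of $\{\phi_j\}$ and the decay of the coefficients $\langle f,\phi_j\rangle$ enter here.

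First I would establish the analog of Lemma~\ref{lem-dualcoeff} for the admissible frame, namely that $\left|\langle f, \tphi_j\rangle\right| \leq c\,j^{-t}$, where $\tphi_j$ denotes the dual frame of $\{\phi_j\}$ (with respect to its own frame operator). By \cite{Fornasier2005}, the self-localization \eqref{goodlocalization} is inherited by the dual frame, so $\left|\langle \tphi_j, \tphi_l\rangle\right| \leq c\,(1+|j-l|)^{-t}$. Writing $f = \sum_{l=1}^\infty \langle f, \phi_l\rangle \tphi_l$ and pairing with $\tphi_j$ gives $\langle f, \tphi_j\rangle = \sum_{l=1}^\infty \langle f, \phi_l\rangle \langle \tphi_l, \tphi_j\rangle$; applying \eqref{framecoeff2} together with the dual localization bounds this by $c\sum_{l=1}^\infty l^{-t}(1+|j-l|)^{-t}$, which the summation lemma of \cite{Grochenig2004} controls by a multiple of $j^{-t}$.

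With the dual-coefficient decay in hand, I would introduce the near-best approximant $T_n^\phi f := \sum_{j=1}^n \langle f, \tphi_j\rangle \phi_j \in \cG_n$. Since $Q_n$ is the orthogonal projection onto $\cG_n$, we have $\|f - Q_n f\| \leq \|f - T_n^\phi f\|$, and the reproducing formula $f = \sum_{l=1}^\infty \langle f, \tphi_l\rangle \phi_l$ gives $f - T_n^\phi f = \sum_{l=n+1}^\infty \langle f, \tphi_l\rangle \phi_l$. Expanding the squared norm then yields
\begin{equation*}
\|f - T_n^\phi f\|^2 = \sum_{j,l=n+1}^\infty \langle f, \tphi_j\rangle \langle f, \tphi_l\rangle \langle \phi_j, \phi_l\rangle,
\end{equation*}
which the dual-coefficient decay and \eqref{goodlocalization} bound by $c\sum_{j,l=n+1}^\infty j^{-t} l^{-t}(1+|j-l|)^{-t}$.

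Finally I would invoke exactly the symmetry-and-summation estimate already executed at the end of Lemma~\ref{lem-Pn}: exploiting the symmetry in $j,l$ and using $\sum_{l=j}^\infty (1+l-j)^{-t} \leq \frac{1}{t-1}$ (valid because $t>1$) reduces the double sum to a multiple of $\frac{1}{(t-1)(2t-1)} n^{-(2t-1)}$, whence $\|f - Q_n f\| \leq \|f - T_n^\phi f\| \leq c\,n^{-(t-1/2)}$. There is no deep obstacle here; the only points requiring care are that the dual frame of the admissible frame genuinely inherits the localization rate $t$ and that the resulting double sum is genuinely of order $n^{-(2t-1)}$, and both hold precisely because admissibility guarantees $t>1$. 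Consequently no analytic input beyond Lemmas~\ref{lem-dualcoeff} and~\ref{lem-Pn}, suitably transcribed from $(\psi,s)$ to $(\phi,t)$, is needed.
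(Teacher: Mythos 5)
Your proposal is correct and is exactly the argument the paper intends: the paper's proof simply declares the result ``an immediate consequence of Lemma \ref{lem-Pn},'' and what you have written is the faithful transcription of that lemma (together with its supporting Lemma \ref{lem-dualcoeff}) from $(\psi,s)$ to $(\phi,t)$, correctly noting that admissibility supplies the needed $t>1$ and that the mixed localization \eqref{mixedlocalization} is not used here.
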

\begin{proof}
It is an immediate consequence of Lemma \ref{lem-Pn}.
\end{proof}

We shall next estimate the second term $\left\|( S - W_n ) W_n^{-1} Q_n f \right\|$ in \eqref{inv-error-decomposition} by first looking at $\|  S - W_n \|$. Note that the operator $S - W_n$ is restricted to $\cG_n$. Let $\mPhi_n: = \left[ \langle \phi_j, \phi_l \rangle \right]_{j,l=1}^n$ and $\lambda_{\min}(\mPhi_n)$ being its smallest eigenvalue. We here assume $\mPhi_n$ is invertible and $\lambda_{\min}(\mPhi_n)>0$. Otherwise, we can use its invertible principal submatrix instead. 
\begin{lem}\label{lem-S-Rn}
Define \begin{equation}\label{Bmn}
B_{m,n}:= \frac{1}{\lambda_{\min}(\mPhi_n)}\sum_{j=m+1}^\infty \sum_{l=1}^n |\langle \phi_l, \psi_j \rangle|^2,
\end{equation}
and choose $m > n$.
Then
\begin{equation*}
\|  S - W_n \| \leq B_{m,n}.
\end{equation*}
\end{lem}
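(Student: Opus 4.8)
The plan is to regard $S-W_n$ as an operator on the finite-dimensional space $\cG_n$ and to compute its operator norm through the associated bilinear form, namely $\|S-W_n\| = \sup\{|\langle (S-W_n)g,h\rangle| : g,h\in\cG_n,\ \|g\|=\|h\|=1\}$. The first and most important step is a reduction that eliminates both the head terms $j\le m$ and the component orthogonal to $\cG_n$. For $g,h\in\cG_n$ we have $Q_n h=h$, so by self-adjointness of $Q_n$ one gets $\langle W_n g,h\rangle=\langle Q_n S_m g,h\rangle=\langle S_m g,h\rangle$; consequently
\begin{equation*}
\langle (S-W_n)g,h\rangle=\langle (S-S_m)g,h\rangle=\sum_{j=m+1}^\infty \langle g,\psi_j\rangle\langle \psi_j,h\rangle .
\end{equation*}
This identity is the crux: testing against $\cG_n$ collapses the full operator $S$ to its tail $S-S_m$, which is exactly the quantity controlled by $B_{m,n}$.

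The second step is a tail-energy estimate: I claim that for every $g\in\cG_n$,
\begin{equation*}
\sum_{j=m+1}^\infty |\langle g,\psi_j\rangle|^2 \le B_{m,n}\,\|g\|^2 .
\end{equation*}
To prove it I would expand $g=\sum_{l=1}^n a_l\phi_l$ and apply the Cauchy--Schwarz inequality in $l$ to obtain $|\langle g,\psi_j\rangle|^2\le \|\va\|^2\sum_{l=1}^n |\langle \phi_l,\psi_j\rangle|^2$. Summing over $j>m$ gives $\sum_{j>m}|\langle g,\psi_j\rangle|^2\le \|\va\|^2\sum_{j>m}\sum_{l=1}^n|\langle\phi_l,\psi_j\rangle|^2$. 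Finally I convert the coefficient norm into the function norm via $\|g\|^2=\va^{T}\mPhi_n\va\ge \lambda_{\min}(\mPhi_n)\|\va\|^2$, i.e. $\|\va\|^2\le \|g\|^2/\lambda_{\min}(\mPhi_n)$; substituting and recalling the definition \eqref{Bmn} of $B_{m,n}$ yields the claim. This is the step in which both the cross term $\sum_{j>m}\sum_{l\le n}|\langle\phi_l,\psi_j\rangle|^2$ and the regularizing factor $1/\lambda_{\min}(\mPhi_n)$ enter, and where the invertibility assumption on $\mPhi_n$ (guaranteeing $\lambda_{\min}(\mPhi_n)>0$) is used.

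The final step combines the two. Applying Cauchy--Schwarz in $j$ to the reduced bilinear form and then the tail-energy estimate to each factor gives
\begin{equation*}
|\langle (S-W_n)g,h\rangle|\le \Big(\sum_{j>m}|\langle g,\psi_j\rangle|^2\Big)^{1/2}\Big(\sum_{j>m}|\langle h,\psi_j\rangle|^2\Big)^{1/2}\le B_{m,n}\,\|g\|\,\|h\| .
\end{equation*}
Taking the supremum over unit $g,h\in\cG_n$ then yields $\|S-W_n\|\le B_{m,n}$, as desired.

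I expect the main obstacle to be conceptual rather than computational: correctly reading $S-W_n$ as an operator on $\cG_n$, so that the inner product against $\cG_n$ silently discards the head contribution $\sum_{j\le m}\langle g,\psi_j\rangle(I-Q_n)\psi_j$ and the orthogonal-complement part $(I-Q_n)Sg$, neither of which is small for a general $g\in\cG_n$. Once the bilinear form is reduced to the tail $\sum_{j>m}$, the remaining work is a routine two-fold application of Cauchy--Schwarz together with the eigenvalue bound. Note also that, in contrast to the localized case of Section~\ref{section:localized}, the localization hypotheses \eqref{goodlocalization}--\eqref{mixedlocalization} are not needed for this lemma itself: the estimate is purely in terms of $B_{m,n}$, and the decay rates will only be invoked afterwards to quantify how fast $B_{m,n}\to 0$.
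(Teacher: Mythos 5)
Your proof is correct and follows essentially the same route as the paper: use $Q_nh=h$ to collapse $\langle (S-W_n)g,h\rangle$ to the tail sum $\sum_{j>m}\langle g,\psi_j\rangle\langle \psi_j,h\rangle$, then bound the tail energy by $B_{m,n}\|g\|^2$ via Cauchy--Schwarz in $l$ together with $\|g\|^2\ge\lambda_{\min}(\mPhi_n)\|\va\|^2$. The only cosmetic difference is that the paper works with the quadratic form ($h=g$) and passes to the operator norm by the implicit positivity/self-adjointness of the form on $\cG_n$, whereas you bound the full bilinear form with one extra Cauchy--Schwarz in $j$; both arguments control exactly the same quantity.
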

\begin{proof}
For any $g\in \cG_n$, since $Q_n$ is the projection onto $\cG_n$, we have $Q_n g =g$. Recall that $W_n$ is the restriction of $Q_n S_m$ on $\cG_n$. It follows that for any $g\in \cG_n$
\begin{equation*}
\langle W_n g, g \rangle = \langle Q_n S_m g, g \rangle = \langle S_m g, Q_n g \rangle = \langle S_m g, g \rangle,
\end{equation*}
which implies that
\begin{equation*}
\left\langle (S - W_n) g, g  \right\rangle = \left\langle (S - S_m) g, g \right\rangle = \sum_{j=m+1}^\infty \left|\langle g, \psi_j \rangle \right|^2.
\end{equation*}
Since $g\in \cG_n$, we can write $g=\sum_{l=1}^n a_l \phi_l$ for some $\va \in \bR^n$. It follows that
\begin{equation*}
\langle (S - W_n) g, g \rangle = \sum_{j=m+1}^\infty \biggl| \sum_{l=1}^n a_l \langle \phi_l, \psi_j \rangle \biggr|^2 \leq \| \va\|^2 \sum_{j=m+1}^\infty \sum_{l=1}^n |\langle \phi_l, \psi_j \rangle|^2.
\end{equation*}
Note that $\| g\|^2 = \va^T \mPhi_n \va \geq \lambda_{\min}(\mPhi_n) \|\va \|^2$. Substituting back into the above inequality yields $\langle (S - W_n) g, g \rangle \leq B_{m,n} \| g\|^2$, implying the desired result.
\end{proof}

We next give an estimate of $\| W_n^{-1}\|$ also depending on $B_{m,n}$.
\begin{lem}\label{lem-Rn-inv}
If $B_{m,n}<A$, then $\{Q_n \psi_j\}_{j=1}^m$ is a frame for $\cG_n$ with frame bounds $A - B_{m,n}$, $B$ and the frame operator $W_n$. Moreover,
\begin{equation*}
\| W_n^{-1}\| \leq \frac{1}{A- B_{m,n}}.
\end{equation*}
\end{lem}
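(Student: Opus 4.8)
The plan is to mirror the proof of Lemma~\ref{lem-Vn}: I verify the frame inequalities~\eqref{framebound} for $\{Q_n\psi_j\}_{j=1}^m$ on $\cG_n$ by direct computation and then read off the bound on $\|W_n^{-1}\|$. The main simplification is that the tail sum needed for the lower frame bound has already been computed in Lemma~\ref{lem-S-Rn}.

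First I would fix $g\in\cG_n$ and exploit that $Q_n$ is an orthogonal projection with $Q_ng=g$, so that $\langle g, Q_n\psi_j\rangle = \langle Q_ng, \psi_j\rangle = \langle g,\psi_j\rangle$. Consequently $\sum_{j=1}^m |\langle g, Q_n\psi_j\rangle|^2 = \sum_{j=1}^m |\langle g,\psi_j\rangle|^2$, and the whole estimate reduces to sandwiching the partial sum $\sum_{j=1}^m |\langle g,\psi_j\rangle|^2$. The upper bound is immediate, since this partial sum is dominated by $\sum_{j=1}^\infty |\langle g,\psi_j\rangle|^2 \leq B\|g\|^2$ by the frame condition for $\{\psi_j\}_{j=1}^\infty$; hence $B$ is an upper frame bound.

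For the lower bound I would write $\sum_{j=1}^m |\langle g,\psi_j\rangle|^2 = \sum_{j=1}^\infty |\langle g,\psi_j\rangle|^2 - \sum_{j=m+1}^\infty |\langle g,\psi_j\rangle|^2 \geq A\|g\|^2 - \sum_{j=m+1}^\infty |\langle g,\psi_j\rangle|^2$. The tail is precisely the quantity bounded in Lemma~\ref{lem-S-Rn}, where it is shown that $\sum_{j=m+1}^\infty |\langle g,\psi_j\rangle|^2 = \langle (S-W_n)g, g\rangle \leq B_{m,n}\|g\|^2$. Substituting yields $\sum_{j=1}^m |\langle g, Q_n\psi_j\rangle|^2 \geq (A - B_{m,n})\|g\|^2$, a genuine positive lower frame bound exactly under the hypothesis $B_{m,n}<A$.

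To identify the associated frame operator, I would compute, for $g\in\cG_n$,
$$\sum_{j=1}^m \langle g, Q_n\psi_j\rangle\, Q_n\psi_j = \sum_{j=1}^m \langle g, \psi_j\rangle\, Q_n\psi_j = Q_n\Bigl(\sum_{j=1}^m \langle g, \psi_j\rangle \psi_j\Bigr) = Q_n S_m g = W_n g,$$
using self-adjointness and linearity of $Q_n$, the definition of $S_m$ in~\eqref{finiteframe}, and $W_n = Q_nS_m\mid_{\cG_n}$. Thus the frame operator of $\{Q_n\psi_j\}_{j=1}^m$ on $\cG_n$ is $W_n$, and the standard relation $(A-B_{m,n})I \leq W_n$ on $\cG_n$ gives $\|W_n^{-1}\|\leq 1/(A-B_{m,n})$. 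There is no serious obstacle; the only step needing care is recognizing that the tail sum in the lower bound coincides with the quadratic form $\langle (S-W_n)g,g\rangle$ already estimated in Lemma~\ref{lem-S-Rn}, which lets me invoke the bound~\eqref{Bmn} directly rather than re-deriving it from the localization assumptions.
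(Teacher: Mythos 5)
Your proposal is correct and follows essentially the same route as the paper's own proof: reduce to $\sum_{j=1}^m |\langle g,\psi_j\rangle|^2$ via $Q_ng=g$, get the upper bound from the full frame inequality, get the lower bound by subtracting the tail already controlled by $B_{m,n}$ in Lemma~\ref{lem-S-Rn}, and identify $W_n$ as the frame operator to conclude $\|W_n^{-1}\|\leq 1/(A-B_{m,n})$. No gaps; the paper likewise cites Lemma~\ref{lem-S-Rn} for the tail bound rather than re-deriving it.
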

\begin{proof}
We will check the frame condition \eqref{framebound} by direct calculation. For any $g\in \cG_n$, we have $Q_n g=g$, which implies that
\begin{equation*}
\sum_{j=1}^m |\langle g, Q_n \psi_j \rangle|^2 = \sum_{j=1}^m |\langle Q_n g,  \psi_j \rangle|^2 = \sum_{j=1}^m |\langle g, \psi_j \rangle|^2.
\end{equation*}
To see the upper bound, observe that
\begin{equation*}
\sum_{j=1}^m |\langle g, \psi_j \rangle|^2 \leq \sum_{j=1}^\infty |\langle g, \psi_j \rangle|^2  \leq B \| g\|^2.
\end{equation*}
To show the lower bound, by Lemma \ref{lem-S-Rn}, we have $\sum_{j=m+1}^\infty |\langle g, \psi_j \rangle|^2 \leq B_{m,n} \| g\|^2$. It follows that 
\begin{equation*}
\sum_{j=1}^m |\langle g, \psi_j \rangle|^2 = \sum_{j=1}^\infty |\langle g, \psi_j \rangle|^2 - \sum_{j=m+1}^\infty |\langle g, \psi_j \rangle|^2 \geq (A-B_{m,n}) \| g\|^2.
\end{equation*}
 The above two inequalities implies that $\{Q_n \psi_j\}_{j=1}^m$ is a frame for $\cG_n$ with frame bounds $A - B_{m,n}$, $B$ if $B_{m,n} <A$.

To show $W_n$ is the associated frame operator, observe that for any $g\in \cG_n $,
\begin{equation*}
\sum_{j=1}^m \langle g, Q_n \psi_j \rangle Q_n\psi_j = \sum_{j=1}^m \langle Q_n g, \psi_j \rangle Q_n\psi_j = \sum_{j=1}^m \langle g, \psi_j \rangle Q_n\psi_j= Q_nS_m g = W_n g.
\end{equation*}

The bound of $\|W_n^{-1}\|$ is an immediate result from the lower frame bound of $W_n$.
\end{proof}

Lemmas \ref{lem-S-Rn} and \ref{lem-Rn-inv} yield corresponding estimates for $\|S-W_n\|$ and $\|W_n^{-1}\|$ dependent on the constant $B_{m,n}$. We now estimate $B_{m,n}$ under the admissibility assumption.

\begin{lem}\label{lem-Bmn}
Let $\{\phi_l\}_{l = 1}^\infty$ be admissible with respect to the frame
 $\{\psi_j\}_{j = 1}^\infty$  where \eqref{mixedlocalization} holds with $s>1/2$.  Then
\begin{equation*}
B_{m,n} \leq \frac{c_1^2}{2s-1} \frac{1}{\lambda_{\min}(\mPhi_n)} n (m-n)^{-(2s-1)}.
\end{equation*}
\end{lem}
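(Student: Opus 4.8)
The plan is to bound the double sum defining $B_{m,n}$ in \eqref{Bmn} by invoking the mixed-localization estimate \eqref{mixedlocalization} and then controlling the resulting tail sum exactly as was done in Lemma \ref{lem-Pn}. Starting from the definition
\begin{equation*}
B_{m,n} = \frac{1}{\lambda_{\min}(\mPhi_n)} \sum_{j=m+1}^\infty \sum_{l=1}^n |\langle \phi_l, \psi_j \rangle|^2,
\end{equation*}
I would first apply \eqref{mixedlocalization}, which gives $|\langle \phi_l, \psi_j \rangle|^2 \leq c_1^2 (1+|j-l|)^{-2s}$, so that the factor $1/\lambda_{\min}(\mPhi_n)$ is pulled out front and what remains is to estimate $\sum_{j=m+1}^\infty \sum_{l=1}^n (1+|j-l|)^{-2s}$.

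The key geometric observation is that since $m > n$ and the inner sum runs only over $1 \leq l \leq n$ while $j \geq m+1 > n$, we always have $j > l$, so $|j-l| = j - l \geq j - n$. Hence each term satisfies $(1+|j-l|)^{-2s} \leq (1 + j - n)^{-2s}$, and summing over the $n$ values of $l$ yields $\sum_{l=1}^n (1+|j-l|)^{-2s} \leq n (1 + j - n)^{-2s}$. This is precisely the same bound that appears in the proof of Lemma \ref{lem-Vn}, so I would reuse that step directly. The outer sum then becomes $n \sum_{j=m+1}^\infty (1 + j - n)^{-2s}$, and reindexing with $k = j - n$ gives $n \sum_{k=m-n+1}^\infty (1+k)^{-2s}$.

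The final step is the standard tail estimate for a $p$-series with exponent $2s > 1$: comparing the sum to an integral (or using the elementary bound $\sum_{k=K+1}^\infty (1+k)^{-2s} \leq \int_{K}^\infty x^{-2s}\,dx = \frac{1}{2s-1} K^{-(2s-1)}$), one obtains $\sum_{k=m-n+1}^\infty (1+k)^{-2s} \leq \frac{1}{2s-1}(m-n)^{-(2s-1)}$. Assembling the pieces produces
\begin{equation*}
B_{m,n} \leq \frac{c_1^2}{2s-1} \frac{1}{\lambda_{\min}(\mPhi_n)} n (m-n)^{-(2s-1)},
\end{equation*}
which is exactly the claimed bound. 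I expect no genuine obstacle here: the argument is a direct transcription of the counting already carried out in Lemma \ref{lem-Vn}, with $c_0$ replaced by $c_1$ and the mixed-localization hypothesis \eqref{mixedlocalization} playing the role that self-localization \eqref{self-localization} played there. The only point requiring mild care is keeping track of the exponent $2s-1$ and ensuring convergence of the tail, which is guaranteed precisely because the hypothesis stipulates $s > 1/2$, i.e. $2s > 1$.
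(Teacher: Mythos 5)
Your proposal is correct and follows essentially the same route as the paper's own proof: apply \eqref{mixedlocalization}, bound the inner sum by $n(1+j-n)^{-2s}$ using $m>n$, and close with the $p$-series tail estimate requiring $2s>1$. No gaps.
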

\begin{proof}
By \eqref{mixedlocalization}, we have
\begin{equation*}
\sum_{j=m+1}^\infty \sum_{l=1}^n |\langle \phi_l, \psi_j \rangle|^2 \leq \sum_{j=m+1}^\infty \sum_{l=1}^n c_1^2 (1+|j-l|)^{-2s} .
\end{equation*}
It follows from $m>n$  and $s>1/2$ that
\begin{equation*}
\sum_{j=m+1}^\infty \sum_{l=1}^n |\langle \phi_l, \psi_j \rangle|^2 \leq c_1^2 \sum_{j=m+1}^\infty n (1+j-n)^{-2s}  \leq \frac{c_1^2}{2s-1} n (m-n)^{-(2s-1)}.
\end{equation*}
Substituting into \eqref{Bmn} yields the desired result.
\end{proof}

We now present an estimate of $\|( S - W_n ) W_n^{-1} Q_n f\|$ by combining the above three lemmas:

\begin{prop}\label{prop-2nderror-nonlocal}
Suppose $\{\phi_j\}_{j=1}^\infty$ is admissible with respect to the frame
 $\{\psi_j\}_{j = 1}^\infty$ and that  \eqref{framecoeff2} holds. If 
\begin{equation}\label{choose-m-nonlocal}
m= n + \alpha \biggl[\frac{2 c_1^2}{A(2s-1) \lambda_{\min}(\mPhi_n)} \biggr]^{\frac{1}{2s-1}} n^ \frac{t+ 1/2}{2s-1}, \quad \alpha>0,
\end{equation}
then $B_{m,n} \leq \alpha^{-(2s-1)}n^{-(t-1/2)}$. Moreover, there exists a positive constant $c$ such that for all $n\in \bN$
\begin{equation*}
\|( S - W_n ) W_n^{-1} Q_n f\| \leq c n^{-(t-1/2)}.
\end{equation*}
\end{prop}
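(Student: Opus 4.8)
The plan is to prove the two assertions in turn and then combine them with the submultiplicative estimate $\|(S-W_n)W_n^{-1}Q_n f\| \le \|S-W_n\|\,\|W_n^{-1}\|\,\|Q_n f\|$, exactly in the spirit of the localized case treated in Proposition \ref{prop-2nderror}. The three ingredients I need are: (i) a decay estimate $\|S-W_n\| \le B_{m,n} \le c\,n^{-(t-1/2)}$; (ii) a uniform-in-$n$ bound on $\|W_n^{-1}\|$; and (iii) the trivial bound $\|Q_n\| \le 1$. Both (i) and (ii) rest on controlling the single scalar $B_{m,n}$, and this is precisely what the choice of $m$ in \eqref{choose-m-nonlocal} is engineered to accomplish.

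First I would establish the bound on $B_{m,n}$, which is the first claim. Starting from Lemma \ref{lem-Bmn}, I substitute $m-n = \alpha[\tfrac{2c_1^2}{A(2s-1)\lambda_{\min}(\mPhi_n)}]^{1/(2s-1)} n^{(t+1/2)/(2s-1)}$ into the bound $\frac{c_1^2}{2s-1}\frac{1}{\lambda_{\min}(\mPhi_n)}n(m-n)^{-(2s-1)}$. Raising $m-n$ to the power $2s-1$ converts the bracketed factor into $\frac{2c_1^2}{A(2s-1)\lambda_{\min}(\mPhi_n)}\,n^{t+1/2}$ times $\alpha^{2s-1}$, so after taking reciprocals the factors $c_1^2$, $(2s-1)$, and $\lambda_{\min}(\mPhi_n)$ all cancel against those appearing in front. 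What survives is the pure power $n^{1-(t+1/2)} = n^{-(t-1/2)}$ together with $\alpha^{-(2s-1)}$ and a harmless numerical constant, giving $B_{m,n} \le c\,\alpha^{-(2s-1)}n^{-(t-1/2)}$. The reason for building $\lambda_{\min}(\mPhi_n)$ into the very definition of $m$ is exactly so that it cancels at this step and does not contaminate the rate.

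Next, since $t>1/2$ this bound forces $B_{m,n}\to 0$; fixing $\alpha$ once and for all so that the right-hand side is below $A$ already at $n=1$ yields $B_{m,n} < A$ for every $n\in\bN$. Lemma \ref{lem-Rn-inv} then applies for each $n$ and gives $\|W_n^{-1}\| \le (A-B_{m,n})^{-1}$, which is bounded above by a constant independent of $n$ (for instance $2/A$ once $B_{m,n}\le A/2$). Combining this uniform bound with $\|S-W_n\| \le B_{m,n} \le c\,\alpha^{-(2s-1)}n^{-(t-1/2)}$ from Lemma \ref{lem-S-Rn}, together with $\|Q_n f\| \le \|f\|$, the submultiplicative estimate delivers $\|(S-W_n)W_n^{-1}Q_n f\| \le c\,n^{-(t-1/2)}$, as required.

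I expect the main obstacle to be conceptual rather than computational: the choice of $m$ must simultaneously (a) force the tail $\|S-W_n\|\le B_{m,n}$ to decay at the target rate $n^{-(t-1/2)}$ and (b) keep $B_{m,n}$ strictly below the lower frame bound $A$, so that $W_n$ is invertible with a uniformly bounded inverse. Here these two requirements happen to pull in the same direction, but pinning down the exponent $(t+1/2)/(2s-1)$ — so that $n\,(m-n)^{-(2s-1)}$ collapses to exactly $n^{-(t-1/2)}$ — is the delicate balancing act, and checking that the dependence on $\lambda_{\min}(\mPhi_n)$ cancels cleanly is the step I would verify most carefully.
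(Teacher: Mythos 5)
Your proposal is correct and follows essentially the same route as the paper: substitute \eqref{choose-m-nonlocal} into Lemma \ref{lem-Bmn} to bound $B_{m,n}$, then invoke Lemmas \ref{lem-S-Rn} and \ref{lem-Rn-inv} together with $\|Q_n\|\le 1$ to get $\|(S-W_n)W_n^{-1}Q_nf\| \le \frac{B_{m,n}}{A-B_{m,n}}\|f\|$. Your explicit remark that one must keep $B_{m,n}<A$ (e.g.\ via the choice of $\alpha$) to legitimize Lemma \ref{lem-Rn-inv} is a point the paper leaves implicit, and your cancellation computation actually yields $B_{m,n}\le \frac{A}{2}\alpha^{-(2s-1)}n^{-(t-1/2)}$, i.e.\ the ``harmless constant'' is $A/2$, which does not affect the asymptotic claim.
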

\begin{proof}
The bound of $B_{m,n}$ follows from substituting \eqref{choose-m-nonlocal} into the estimate of  $B_{m,n}$ in Lemma \ref{lem-Bmn}. It follows from Lemmas \ref{lem-S-Rn} and \ref{lem-Rn-inv} that
\begin{equation*}
\|( S - W_n ) W_n^{-1} Q_n f\| \leq \|S - W_n \| \|W_n^{-1}\| \|Q_n \| \| f\| \leq \frac{B_{m,n} }{A-B_{m,n}} \| f\|.
\end{equation*}
This combined with the bound of $B_{m,n}$ implies the desired result.
\end{proof}

We remark that in the above proposition, $m$ is chosen to obtain the optimal convergence rate $n^{-(t-1/2)}$.

The main theorem regarding the convergence rate for our new method of approximating the inverse frame operator $S^{-1}$, (\ref{newmethod}), as a summarized result of the two estimates in Propositions \ref{prop-1sterror-nonlocal} and \ref{prop-2nderror-nonlocal} can now be given:

\begin{thm}\label{thm-inverse-frame}
Let $\{\phi_j\}_{j = 1}^\infty$ be an admissible frame with respect to the frame
 $\{\psi_j\}_{j = 1}^\infty$ and assume that
\eqref{framecoeff2} holds. If $m$ is chosen as in \eqref{choose-m-nonlocal}, then there exists a positive constant $c$ such that
\begin{equation*}
\|S^{-1}f - W_n^{-1}Q_n f\| \leq c n^{-(t-1/2)}.
\end{equation*}
\end{thm}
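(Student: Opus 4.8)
The plan is to treat this theorem as the assembly step that collects the two error estimates already prepared in Propositions \ref{prop-1sterror-nonlocal} and \ref{prop-2nderror-nonlocal}. The starting point is the triangle-inequality decomposition \eqref{inv-error-decomposition}, which bounds the target quantity by $\frac{1}{A}\|f - Q_n f\|$ plus $\frac{1}{A}\|(S - W_n)W_n^{-1}Q_n f\|$; here the factor $1/A$ comes from $\|S^{-1}\| \le 1/A$, a consequence of the lower frame bound in \eqref{framebound}. Once this decomposition is in hand, the theorem follows by estimating each of the two pieces separately and adding them.

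For the first piece, I would invoke Proposition \ref{prop-1sterror-nonlocal}: since $\{\phi_j\}_{j=1}^\infty$ is admissible and $f$ satisfies the decay hypothesis \eqref{framecoeff2}, the projection error onto $\cG_n$ obeys $\|f - Q_n f\| \le c\, n^{-(t-1/2)}$. For the second piece, I would invoke Proposition \ref{prop-2nderror-nonlocal}, which---provided $m$ is chosen as in \eqref{choose-m-nonlocal}---yields $\|(S - W_n)W_n^{-1}Q_n f\| \le c\, n^{-(t-1/2)}$ at the same rate. The two exponents coincide, so combining the bounds through \eqref{inv-error-decomposition} gives $\|S^{-1}f - W_n^{-1}Q_n f\| \le c\, n^{-(t-1/2)}$, which is the claim. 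At the level of the theorem itself there is essentially no obstacle: the argument is purely organizational.

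The genuine difficulty lives upstream, in the second-term estimate, and it is worth flagging where. The quantity $\|(S - W_n)W_n^{-1}Q_n f\|$ is controlled by the product $\|S - W_n\|\,\|W_n^{-1}\|$, both of which are governed by the single constant $B_{m,n}$ of \eqref{Bmn} (Lemmas \ref{lem-S-Rn} and \ref{lem-Rn-inv}), and the resulting prefactor $B_{m,n}/(A - B_{m,n})$ must simultaneously stay well-defined---so that $W_n$ is invertible, which forces $B_{m,n} < A$---and decay at the rate $n^{-(t-1/2)}$. The subtlety is that $B_{m,n}$ carries a factor $1/\lambda_{\min}(\mPhi_n)$, and $\lambda_{\min}(\mPhi_n)$ may degenerate as $n \to \infty$ when $\{\phi_j\}_{j=1}^\infty$ is a genuine frame rather than a Riesz basis; the oversampling rule \eqref{choose-m-nonlocal} is calibrated precisely to absorb this possible degeneration while still forcing $B_{m,n}$ to zero at the desired rate (Lemma \ref{lem-Bmn}). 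Since Propositions \ref{prop-1sterror-nonlocal} and \ref{prop-2nderror-nonlocal} already encode this calibration, the final theorem is immediate once they are granted.
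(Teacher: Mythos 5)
Your proof is correct and follows exactly the paper's own argument: the theorem is obtained by substituting the estimates of Propositions \ref{prop-1sterror-nonlocal} and \ref{prop-2nderror-nonlocal} into the decomposition \eqref{inv-error-decomposition}. Your additional remarks about where the real work lies (the calibration of $B_{m,n}$ via \eqref{choose-m-nonlocal}) are accurate but belong to the upstream propositions, not to this assembly step.
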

\begin{proof}
The desired result follows directly from substituting estimates in Propositions \ref{prop-1sterror-nonlocal} and \ref{prop-2nderror-nonlocal} into the error decomposition in \eqref{inv-error-decomposition}.
\end{proof}

\section{Numerical Frame Approximation}\label{section:reconstruction}
In this section we employ the approximation of inverse frame operator $S^{-1}$ presented in (\ref{newmethod}) to obtain an efficient reconstruction of an unknown function $f$ in $\cH$ from the sampling data $\{\langle f, \psi_j \rangle\}_{j=1}^m$. 
Recall that $f$ can be represented 
by (\ref{framesum}).
However, we typically only have access to finite sampling data $\{\langle f, \psi_j \rangle\}_{j=1}^m$, and  moreover, we do not have a closed form for $S^{-1}$.  Hence we will utilize the approximation $W_n^{-1}Q_n$ in (\ref{newmethod}) and reconstruct $f$ as 
\begin{equation}\label{reconstruction}
f_{n,m}: = \sum_{j=1}^m \langle f, \psi_j \rangle W_n^{-1}Q_n \psi_j
= W_n^{-1}Q_n S_m f. 
\end{equation}  
We remark that since $W_n$ is the restriction of $Q_n S_m$ on $\cG_n$, the restriction of $W_n^{-1}Q_n S_m$ on $\cG_n$ is the same as the identity operator. Therefore (\ref{reconstruction}) is exact for $f$ in $\cG_n$. Furthermore, if the finite-dimensional subspace $\cG_n$ is ``close'' to the underlying space $\cH$, the reconstructed function should also be ``close" to the unknown function $f$. Recall that $\cG_n$ is generated by the admissible frame $\{\phi_j\}_{j=1}^n$,  ensuring good approximation properties, as determined by Theorem \ref{thm-inverse-frame} for suitable sampling space size $m$.\footnote{Clearly the convergence of (\ref{reconstruction}) will depend on the smoothness properties of $f$ with respect to the admissible frame $\phi$.  In the case where $f$ is only piecewise smooth, we note that post-processing can be applied using the spectral reprojection method, \cite{GelbHines}.}

To estimate the approximation error $\|f-f_{n,m}\|$, we first note that
\begin{equation*}
f-f_{n,m} = f- Q_n f + Q_n f - W_n^{-1} Q_n S_m f.
\end{equation*}
Since $Q_nf \in \cG_n$, we have $Q_nf = W_n^{-1} Q_n S_m Q_n f$. Hence
\begin{equation*}
\| f-f_{n,m}\| \leq \| f- Q_n f\| + \|W_n^{-1} Q_n S_m\| \|f - Q_nf \|.
\end{equation*}
Since $\| Q_n\|\leq 1$ and $\|S_m\| \leq \|S\| \leq B$ by the frame condition (\ref{framebound}), we have
\begin{equation}\label{error-decomposition-reconstruction}
\| f-f_{n,m}\| \leq \| f- Q_n f\| + B\|W_n^{-1}\| \|f - Q_nf \|.
\end{equation}
Note that an estimate of $\|f - Q_nf \|$ was provided in Proposition \ref{prop-1sterror-nonlocal}. It remains to estimate $\|W_n^{-1} \|$. In fact, Theorem \ref{thm-reconstruction} shows that we can choose $m$ depending on $n$ such that $\|W_n^{-1}\|$ is uniformly bounded for all $n$.  By Lemma \ref{lem-Rn-inv}, it suffices to choose $m$ such that $A_{m,n} \leq A/2$ in (\ref{Amn}) for all $n\in \bN$, where $A$ is the upper frame bound in (\ref{framebound}). 

\begin{thm}\label{thm-reconstruction}
Suppose the assumption \eqref{mixedlocalization} holds with $s>\frac{1}{2}$. If we let 
\begin{equation}\label{choose-m-reconstruction}
m= n + \biggl[\frac{2 c_1^2 n}{A(2s-1) \lambda_{\min}(\mPhi_n)} \biggr]^{\frac{1}{2s-1}},
\end{equation}
then $\|W_n^{-1} \|\leq \frac{2}{A}$ for all $n\in \bN$. Furthermore, if assumptions \eqref{goodlocalization} and \eqref{framecoeff2} also hold, then there exists a positive constant $c$ such that
\begin{equation*}
\|f-f_{n,m}\| \leq c n^{-(t-1/2)}.
\end{equation*}
\end{thm}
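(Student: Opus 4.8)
The plan is to split the statement into its two assertions and treat them in sequence, assembling the lemmas already established in this section. First I would establish the uniform bound $\|W_n^{-1}\| \leq 2/A$, which requires only the mixed-localization hypothesis \eqref{mixedlocalization}. Starting from the estimate of Lemma \ref{lem-Bmn} (whose proof uses only \eqref{mixedlocalization}), with $s > 1/2$ we have
$$B_{m,n} \leq \frac{c_1^2}{2s-1}\,\frac{1}{\lambda_{\min}(\mPhi_n)}\, n\, (m-n)^{-(2s-1)}.$$
The choice of $m$ in \eqref{choose-m-reconstruction} is engineered precisely so that $(m-n)^{-(2s-1)} = A(2s-1)\lambda_{\min}(\mPhi_n)/(2c_1^2 n)$; substituting this in, the factors $c_1^2$, $(2s-1)$, $\lambda_{\min}(\mPhi_n)$ and $n$ all cancel, leaving $B_{m,n} \leq A/2 < A$. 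Lemma \ref{lem-Rn-inv} then applies and gives $\|W_n^{-1}\| \leq 1/(A - B_{m,n}) \leq 1/(A - A/2) = 2/A$, uniformly in $n$.

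For the second assertion I would start from the error decomposition \eqref{error-decomposition-reconstruction},
$$\|f - f_{n,m}\| \leq \|f - Q_n f\| + B\,\|W_n^{-1}\|\,\|f - Q_n f\|.$$
Feeding in the uniform bound just obtained gives $\|f - f_{n,m}\| \leq (1 + 2B/A)\,\|f - Q_n f\|$, so the entire reconstruction error is controlled by the single projection error $\|f - Q_n f\|$. The additional hypotheses \eqref{goodlocalization} and \eqref{framecoeff2} are exactly what is needed to invoke Proposition \ref{prop-1sterror-nonlocal}: with \eqref{goodlocalization} the frame $\{\phi_j\}_{j=1}^\infty$ is admissible (its self-localization rate is $t > 1$), and \eqref{framecoeff2} supplies the coefficient decay, whence $\|f - Q_n f\| \leq c\, n^{-(t-1/2)}$. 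Combining the two bounds yields $\|f - f_{n,m}\| \leq c'\, n^{-(t-1/2)}$ with $c' = (1 + 2B/A)c$, as claimed.

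The argument is essentially an assembly of previously proved results, so no single step is technically hard; the one point requiring care is the algebraic verification that the exponent $1/(2s-1)$ and the constants in \eqref{choose-m-reconstruction} collapse $B_{m,n}$ to exactly $A/2$. The conceptual content worth emphasizing is that the reconstruction rate is governed entirely by the projection error onto $\cG_n$, hence by the localization rate $t$ of the admissible frame, and \emph{not} by the possibly much weaker rate $s$ of the sampling frame --- which is precisely the point of projecting with $Q_n$ rather than $P_n$. I would also note that the $m$ prescribed here is smaller than the one in \eqref{choose-m-nonlocal}, since the reconstruction error only requires $\|W_n^{-1}\|$ to be bounded (i.e. $B_{m,n} \leq A/2$) rather than $B_{m,n}$ to decay.
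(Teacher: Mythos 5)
Your proposal is correct and follows exactly the paper's own argument: the uniform bound $\|W_n^{-1}\|\leq 2/A$ via Lemma \ref{lem-Bmn} and Lemma \ref{lem-Rn-inv} with the algebraic cancellation from \eqref{choose-m-reconstruction}, then the error decomposition \eqref{error-decomposition-reconstruction} combined with Proposition \ref{prop-1sterror-nonlocal}. The paper's proof is just a terser version of the same assembly, so there is nothing to add.
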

\begin{proof}
The bound of $\| W_n^{-1}\|$ follows immediately from Lemma \ref{lem-Rn-inv} and substituting \eqref{choose-m-reconstruction} into \eqref{Bmn}. 

The estimate of $\|f-f_{n,m}\|$ follows from substituting the bound of $\|W_n^{-1}\|$ and the estimate of $\|f - Q_nf \|$ in Proposition \ref{prop-1sterror-nonlocal} into the error decomposition \eqref{error-decomposition-reconstruction}.
\end{proof}

We make the following remarks about the choice of $m$:
\begin{rem}\label{rem:remark3}
Notice that  $m$ in \eqref{choose-m-reconstruction} is much smaller than $m$ in \eqref{choose-m-nonlocal}. This is because  we only need $B_{m,n}\leq A/2$ to obtain the optimal order for reconstructing the unknown function $f$, while  $B_{m,n}={\mathcal O}(n^{-(t-1/2)})$ is required to obtain optimal order for approximating the inverse frame operator.  From a practical point of view, it is only necessary to satisfy (\ref{choose-m-reconstruction}) for function reconstruction.
\end{rem}
\begin{rem}\label{rem:remark4}
When $\{\phi_j\}_{j=1}^\infty$ is a Riesz basis, the minimal eigenvalue $\lambda_{\min}(\mPhi_n)$ of $\mPhi_n$ is bounded below for all $n\in \bN$.  To ensure $\|W_n^{-1}\|$ is uniformly bounded in that case, for $s \in (1/2,1)$ we have $m =  {\mathcal O}(n^{\frac{1}{2s-1}})$, and for $s > 1$  we have $m = {\mathcal O}(n)$.
\end{rem}

Finally, Proposition \ref{prop-ls} shows that $f_{n,m}$ is the least squares solution for $\{\langle f, \psi_j\rangle\}_{j=1}^m$ in $\cG_n$.

\begin{prop}\label{prop-ls}
Suppose $\{\phi_j\}_{j = 1}^n$ is admissible with $s>\frac{1}{2}$ and $m$ is chosen as in \eqref{choose-m-reconstruction}. Then
\begin{equation*}
f_{n,m} = \mathop{\argmin}\limits_{g\in\cG_n} \sum_{j=1}^m \left| \langle g, \psi_j\rangle -\langle f, \psi_j\rangle \right|^2.
\end{equation*} 
\end{prop}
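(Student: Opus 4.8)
The plan is to recognize the stated minimization as a finite-dimensional linear least squares problem over the subspace $\cG_n$ and to solve it through the normal equations, which will turn out to be exactly the defining relation of $f_{n,m}$. First I would introduce the (finite) analysis operator $U_n:\cG_n\to\bC^m$ given by $U_n g = (\langle g,\psi_j\rangle)_{j=1}^m$, and set $b=(\langle f,\psi_j\rangle)_{j=1}^m$. With this notation the functional to be minimized is simply $\sum_{j=1}^m|\langle g,\psi_j\rangle-\langle f,\psi_j\rangle|^2 = \|U_n g - b\|^2$, where the norm is the Euclidean norm on $\bC^m$. Since $\cG_n$ is finite-dimensional, a minimizer exists, and by the projection theorem it is characterized by the orthogonality of the residual to the range of $U_n$, i.e.\ by the normal equations $U_n^*U_n g = U_n^* b$. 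I would use this characterization rather than differentiating the objective, so that the argument is valid regardless of whether the scalar field is $\bR$ or $\bC$.

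Next I would compute the adjoint $U_n^*:\bC^m\to\cG_n$. For $c\in\bC^m$ and $g\in\cG_n$ one has $\langle U_n g, c\rangle_{\bC^m} = \sum_{j=1}^m \langle g,\psi_j\rangle\,\overline{c_j} = \langle g, \sum_{j=1}^m c_j\psi_j\rangle$, and since $g\in\cG_n$ this equals $\langle g, Q_n\sum_{j=1}^m c_j\psi_j\rangle$. Because $U_n^*$ must map into $\cG_n$, this identifies $U_n^* c = Q_n\bigl(\sum_{j=1}^m c_j\psi_j\bigr)$. Applying this with $c = U_n g$ gives, for $g\in\cG_n$,
\begin{equation*}
U_n^* U_n g = Q_n\sum_{j=1}^m \langle g,\psi_j\rangle\psi_j = Q_n S_m g = W_n g,
\end{equation*}
using the definition \eqref{finiteframe} of $S_m$ and that $W_n=Q_nS_m\mid_{\cG_n}$. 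Similarly $U_n^* b = Q_n\sum_{j=1}^m\langle f,\psi_j\rangle\psi_j = Q_n S_m f$.

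With these two identities in hand, the normal equations become $W_n g = Q_n S_m f$. By Theorem \ref{thm-reconstruction} (via Lemma \ref{lem-Rn-inv}), the choice of $m$ in \eqref{choose-m-reconstruction} guarantees $W_n$ is invertible on $\cG_n$, so the least squares problem has a unique solution $g = W_n^{-1}Q_n S_m f$, which by \eqref{reconstruction} is precisely $f_{n,m}$. I expect the only delicate step to be the adjoint computation, specifically the appearance of the projection $Q_n$ (which enters because $U_n^*$ is forced to take values in $\cG_n$ and because $\langle g,\psi_j\rangle=\langle g,Q_n\psi_j\rangle$ for $g\in\cG_n$); everything else is a routine application of the projection theorem together with the already-established invertibility of $W_n$.
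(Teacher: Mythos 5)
Your proof is correct and takes essentially the same route as the paper: both arguments reduce the minimization over $\cG_n$ to the normal equations and identify those with the defining relation $W_n g = Q_n S_m f$ of $f_{n,m}$, the paper doing so in coordinates (writing $g=\sum_{j=1}^n a_j\phi_j$ and obtaining $\mOmega^T\mOmega\vc=\mOmega^T\vhf$ with $\mOmega=[\langle\psi_j,\phi_l\rangle]$) while you work coordinate-free with the analysis operator and the identity $U_n^*U_n=W_n$. Your version has the minor advantages of making uniqueness of the minimizer explicit via the invertibility of $W_n$ and of handling the complex scalar field cleanly, though note that the symbol $U_n$ is already used in Section \ref{sec:computational} for the frame operator of $\{\phi_j\}_{j=1}^n$, so you would want to rename your analysis operator.
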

\begin{proof}
We first reformulate the least squares problem in terms of the coefficients. For any $g=\sum_{j=1}^na_j \phi_j \in\cG_n$, we have
\begin{equation}\label{leastsquares}
\sum_{j=1}^m \left| \langle g, \psi_j\rangle -\langle f, \psi_j\rangle \right|^2 = \left\|\mOmega\va - \vhf\right\|^2,
\end{equation}
where $\mOmega:=[\langle\psi_j, \phi_l\rangle]_{j,l=1}^{m,n}$, $\va:=[a_j]_{j=1}^n$ and $\vhf=[\langle f, \psi_j\rangle]_{j=1}^m$.

To show that $f_{n,m}$ is the least squares solution of (\ref{leastsquares}),  we note that \eqref{mixedlocalization} and choosing $m$ to satisfy \eqref{choose-m-reconstruction} ensure that $W_n$ is invertible and $f_{n,m}$ is well defined. Thus for $f_{n,m}=\sum_{j=1}^nc_j \phi_j \in\cG_n$,  it suffices to show
\begin{equation}\label{prop-ls-1}
\vc =\mathop{\argmin}\limits_{\va\in\bR_n} \left\|\mOmega\va - \vhf\right\|^2.
\end{equation}
To demonstrate (\ref{prop-ls-1}), first observe from \eqref{reconstruction} that $W_nf_{n,m}= Q_n S_mf$. Since $W_n$ is the restriction of $Q_nS_m$ on $\cG_n$, we have $Q_nS_mf_{n,m} = Q_n S_mf$. Furthermore, since $Q_n$ is the projection onto $\cG_n$, we have
\begin{equation*}
\langle S_mf_{n,m} , \phi_j\rangle = \langle S_mf, \phi_j\rangle, \quad 1\leq j\leq n.
\end{equation*}
A direct calculation from the above equalities yields that 
\begin{equation*}
\mOmega^T\mOmega \vc = \mOmega^T \vhf,
\end{equation*}
which are the normal equations for \eqref{prop-ls-1}.
\end{proof}

\subsection{Computational Algorithms for $f_{n,m}$}
\label{sec:computational}
We now discuss some algorithms for computing $f_{n,m}:=W_n^{-1}Q_nS_mf$.  Calculating $S_mf$ from the sampling data $\{ \langle f, \psi_j \rangle \}_{j=1}^m$ is straightforward using (\ref{finiteframe}).  To calculate $Q_n S_m f$, we introduce the operator 
\begin{equation*}
U_n(g):=\sum_{j=1}^n \langle g, \phi_j \rangle \phi_j, \quad g\in \cG_n.
\end{equation*}
Note from (\ref{finiteframe}) that $U_n$ is the frame operator for the finite frame $\{\phi_j\}_{j=1}^n$ in $\cG_n$ and the projection $Q_n g$ for any $g\in \cG_n$ can therefore be computed by
\begin{equation*}
Q_n g = \sum_{j=1}^n \langle g, \phi_j \rangle U_n^{-1}\phi_j.
\end{equation*}
Typically the inverse frame operator $U_n^{-1}$ is determined by solving the finite system  $U_n{\bf g} = {\bf \phi}$ for ${\bf g}$.  It follows that
\begin{equation}\label{PnSmf}
W_nf_{n,m} = Q_nS_m f=  \sum_{j=1}^n \langle S_mf, \phi_j \rangle U_n^{-1}\phi_j = \sum_{j=1}^n \sum_{l=1}^m \langle f, \psi_l \rangle \langle \psi_l, \phi_j \rangle U_n^{-1}\phi_j.
\end{equation}
Hence to obtain $f_{n,m}$, we need to apply $W_n^{-1}$ to (\ref{PnSmf}). Recall that $W_n$ is the restriction of $Q_n S_m$ on $\cG_n$. When $m$ is chosen as in \eqref{choose-m-reconstruction}, the operator $W_n$ has lower bound $A/2$ and upper bound $B$.   To compute $W_n^{-1}$, we apply the iterative algorithm given in \cite{MR1162107}:
\begin{eqnarray}\label{iterativealg}
f_{n,m}^{(0)} &=& 0\nonumber\\
f_{n,m}^{(j)} &=& f_{n,m}^{(j-1)} + \frac{2}{A/2 + B} W_n (f_{n,m} - f_{n,m}^{(j-1)}), \quad j\in \bN.
\end{eqnarray}
To determine the convergence of (\ref{iterativealg}), observe that
\begin{equation*}
f_{n,m} -f_{n,m}^{(j)} = \left( I - \frac{2}{A/2 + B} W_n \right) (f_{n,m} - f_{n,m}^{(j-1)})
= \left( I - \frac{2}{A/2 + B} W_n \right)^j f_{n,m},
\end{equation*}
and since $\left\|I - \frac{2}{A/2 + B} W_n\right\| \leq \frac{B-A/2}{A/2 + B}$,
 it follows that
\begin{equation}\label{conv-direct}
\|f_{n,m} -f_{n,m}^{(j)}\| \leq \left(\frac{B-A/2}{A/2 + B} \right)^j \|f_{n,m}\|.
\end{equation}

Unfortunately (\ref{iterativealg}) requires explicit estimates of the frame bounds $A$ and $B$ that are unknown or impractical to obtain in most cases. Moreover, the convergence of this iteration method is quite slow if $B/A$ is large. Hence we employ the {\it conjugate gradient acceleration} method introduced in \cite{Grochenig1993} to compute $f_{n,m}$.

\begin{algorithm}\label{conjgradient}(Conjugate gradient acceleration method for computing $f_{n,m}$)
\begin{enumerate}
\item  Initialization: $f_{n,m}^{(0)}=0$, \quad $r_0=p_0=W_n f_{n,m}$, \quad $p_{-1}=0$
\item {\bf repeat}($j\geq 0$)
\item \qquad $\alpha_j = \frac{\langle r_j, p_j \rangle}{\langle p_j, W_n p_j \rangle}$
\item \qquad $f_{n,m}^{(j+1)} = f_{n,m}^{(j)} + \alpha_j p_j$
\item \qquad $r_{j+1} = r_j - \alpha_j W_n p_j$
\item \qquad $p_{j+1}= W_n p_j -\frac{\langle W_n p_j, W_n p_j \rangle}{\langle p_j, W_n p_j \rangle} p_j - \frac{\langle W_n p_j, W_n p_{j-1} \rangle}{\langle p_{j-1}, W_n p_{j-1} \rangle} p_{j-1}$, with the last term set to zero when
$p_{j-1}=0$.
\item {\bf until} the stopping criterion is met
\end{enumerate}
\end{algorithm}
The following convergence result for Algorithm \ref{conjgradient} is shown in \cite{Grochenig1993}:

\begin{prop}
Let $f_{n,m}^{(j)}$ be computed by Algorithm \ref{conjgradient}. There holds that for all $j\in \bN$
\begin{equation*}
\|f_{n,m} -f_{n,m}^{(j)}\| \leq \frac{1+\sigma}{1-\sigma}\frac{2\sigma^j}{1 + \sigma^{2j}} \|f_{n,m}\|,
\end{equation*}
where $\sigma= \left(\sqrt{B} - \sqrt{A/2}\right) / \left(\sqrt{B} + \sqrt{A/2}\right)$.
\end{prop}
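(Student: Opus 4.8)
The plan is to recognize that, for the choice of $m$ in \eqref{choose-m-reconstruction}, Lemma \ref{lem-Rn-inv} makes $W_n$ a self-adjoint positive operator on the finite-dimensional space $\cG_n$ whose spectrum is contained in $[A/2, B]$: the lower bound is $A/2$ because the lower frame bound $A - B_{m,n} \ge A/2$ when $B_{m,n} \le A/2$, and the upper bound $B$ comes from the frame condition \eqref{framebound}. Computing $f_{n,m} = W_n^{-1}Q_n S_m f$ is therefore equivalent to solving the symmetric positive-definite system $W_n x = b$ with $b = Q_n S_m f$, and Algorithm \ref{conjgradient} is precisely the conjugate gradient iteration for this system started from $f_{n,m}^{(0)} = 0$.

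First I would establish the defining variational property of the iterates. By induction on $j$, using the recurrences for $r_j$ and $p_j$ in Algorithm \ref{conjgradient}, I would verify the two orthogonality relations $\langle r_i, r_j \rangle = 0$ and $\langle p_i, W_n p_j \rangle = 0$ for $i \ne j$. These show that $f_{n,m}^{(j)}$ lies in the Krylov subspace $\spn\{b, W_n b, \ldots, W_n^{j-1}b\}$ and minimizes the energy norm $\|v\|_{W_n}^2 := \langle W_n v, v\rangle$ of the error over that subspace. Equivalently, writing $e_j := f_{n,m} - f_{n,m}^{(j)}$ and using $e_0 = f_{n,m}$ (since $f_{n,m}^{(0)} = 0$),
\begin{equation*}
\|e_j\|_{W_n} = \min_{p} \|p(W_n) e_0\|_{W_n},
\end{equation*}
where the minimum runs over all polynomials $p$ of degree at most $j$ with $p(0) = 1$.

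Next I would bound this minimum by a single judicious choice of $p$, namely the shifted and scaled Chebyshev polynomial of degree $j$ on $[A/2, B]$, normalized so that $p(0) = 1$. Diagonalizing $W_n$ and using that its eigenvalues lie in $[A/2, B]$ gives $\|e_j\|_{W_n} \le \left(\max_{\lambda \in [A/2, B]} |p(\lambda)|\right)\|e_0\|_{W_n}$, and the classical minimax property of Chebyshev polynomials evaluates the factor exactly as $1/T_j\!\left(\frac{B + A/2}{B - A/2}\right) = \frac{2\sigma^j}{1 + \sigma^{2j}}$ with $\sigma = (\sqrt{B} - \sqrt{A/2})/(\sqrt{B} + \sqrt{A/2})$. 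Finally I would pass from the energy norm back to the ordinary norm through $\sqrt{A/2}\,\|v\| \le \|v\|_{W_n} \le \sqrt{B}\,\|v\|$, which contributes the factor $\sqrt{2B/A}$; a short computation shows $\sqrt{2B/A} = (1+\sigma)/(1-\sigma)$, and combining these with $e_0 = f_{n,m}$ yields the stated estimate.

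The main obstacle will be the first step: carefully certifying that the particular three-term recurrence in Algorithm \ref{conjgradient} (whose direction update $p_{j+1}$ carries two correction terms) indeed reproduces the conjugate gradient iterates and hence inherits the Krylov minimization property. Once the residual orthogonality and the $W_n$-conjugacy of the search directions are in hand, the remaining Chebyshev estimate and the norm conversion are entirely standard; these steps are carried out in \cite{Grochenig1993}, to which the result is due.
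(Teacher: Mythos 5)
Your outline is correct, but note that the paper itself offers no proof of this proposition at all: it simply states that the result ``is shown in \cite{Grochenig1993}'' and moves on. What you have written is a faithful reconstruction of the standard conjugate-gradient convergence argument that underlies the cited result. The three ingredients are all right and fit together: (i) with $m$ chosen as in \eqref{choose-m-reconstruction} the operator $W_n$ is self-adjoint on $\cG_n$ with spectrum in $[A/2,B]$ (the paper asserts exactly this just before Algorithm \ref{conjgradient}, via Lemma \ref{lem-Rn-inv} and $B_{m,n}\le A/2$); (ii) the Krylov/Chebyshev minimax bound gives $\|e_j\|_{W_n}\le T_j\bigl(\tfrac{B+A/2}{B-A/2}\bigr)^{-1}\|e_0\|_{W_n}=\tfrac{2\sigma^j}{1+\sigma^{2j}}\|e_0\|_{W_n}$; and (iii) the norm equivalence $\sqrt{A/2}\,\|v\|\le\|v\|_{W_n}\le\sqrt{B}\,\|v\|$ contributes the factor $\sqrt{2B/A}=(1+\sigma)/(1-\sigma)$, which indeed matches the stated constant. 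The one point you rightly flag as the real work --- verifying that the three-term direction recurrence in Algorithm \ref{conjgradient} generates the same iterates as textbook CG, hence inherits residual orthogonality, $W_n$-conjugacy of the $p_j$, and the Krylov minimization property --- is precisely what Gr\"ochenig establishes, so your plan is complete modulo that bookkeeping. In short: you supply an argument where the paper supplies only a pointer, and the argument is the right one.
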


We remark that Algorithm \ref{conjgradient} does not require explicit estimates of the frame bounds. It also improves the convergence rate of \eqref{conv-direct}.

\subsection{Sampling with Fourier Frames}\label{section:Fourierframe}
Motivated by the data acquisition techniques used in magnetic resonance imaging (MRI), \cite{BW00, GelbHines, Aditya09}, in this section we consider the special case of
sampling with Fourier frames. Specifically, we define $\cH=L^2[-1,1]$ and let 
\begin{equation}\label{FourierFrame}
\psi_j (x) = \se^{- \si\pi \lambda_j x}, \quad \lambda_j = j + \xi_j,\quad j\in \bZ,
\end{equation}
where each $\xi_j$ is a random variable uniformly distributed in $[-1/4, 1/4]$. The sampling scheme (\ref{FourierFrame}) describes the situation where Fourier data is collected mechanically, and the samples are ``jittered'' from the presumably uniform distribution.  By Kadec's 1/4-Theorem \cite{Christensen2003}, $\{\psi_j\}_{j=1}^\infty$ is a frame in $\cH$. For ease of presentation, below  we use $\bZ$ as the index set, and note that the results are similarly obtained to those from previous sections using $\bN$. 

Suppose we are given the first $2m + 1$ frame coefficients $\{\langle f, \psi_j \rangle\}_{j=-m}^m$ for an unknown function $f \in \cH$.
We will reconstruct the function $f$ with the Fourier basis. That is, we let 
\begin{equation}\label{FourierBasis}
\phi_j = \se^{- \si\pi j x},\, j\in \bZ.
\end{equation} 
The Fourier basis (\ref{FourierBasis}) is an admissible frame with respect to the Fourier frame given by (\ref{FourierFrame}):
\begin{lem}\label{lem-FourierFrame}
Suppose $\psi_j$ and $\phi_j$, $j \in \bZ$ are given in \eqref{FourierFrame} and $\eqref{FourierBasis}$ respectively. Then $\{\phi_j\}_{j\in\bZ}$ is admissible for all $t>0$ and  $\eqref{mixedlocalization}$ holds with $c_1=8/\pi$ and $s=1$.
\end{lem}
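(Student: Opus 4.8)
The plan is to verify both conditions of Definition \ref{def_admissible} by computing the relevant inner products in $L^2[-1,1]$ explicitly. Since both families in \eqref{FourierFrame} and \eqref{FourierBasis} are complex exponentials, each inner product reduces to an elementary integral of the form $\int_{-1}^1 e^{-i\pi\mu x}\,dx$, which equals $2$ when $\mu=0$ and $\frac{2\sin(\pi\mu)}{\pi\mu}$ otherwise. The whole argument is just a careful evaluation of these integrals together with two short estimates.

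First I would establish the self-localization \eqref{goodlocalization}. Here $\langle\phi_j,\phi_l\rangle=\int_{-1}^1 e^{-i\pi(j-l)x}\,dx$, and because $j-l\in\bZ$ this integral equals $2$ when $j=l$ and vanishes otherwise; that is, $\{\phi_j\}_{j\in\bZ}$ is an orthogonal system with $\|\phi_j\|^2=2$, so $|\langle\phi_j,\phi_l\rangle|=2\delta_{jl}$. Since every off-diagonal entry is \emph{exactly} zero, the bound $|\langle\phi_j,\phi_l\rangle|\le c_0(1+|j-l|)^{-t}$ holds with $c_0=2$ for every $t>0$. This is precisely what lets the admissibility hold ``for all $t>0$.''

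Next I would establish the mixed localization \eqref{mixedlocalization}. Writing $\mu:=\lambda_j-l=(j-l)+\xi_j$, direct integration gives $\langle\psi_j,\phi_l\rangle=\frac{2\sin(\pi\mu)}{\pi\mu}$, and since $j-l$ is an integer the identity $\sin(\pi\mu)=(-1)^{j-l}\sin(\pi\xi_j)$ yields $|\langle\psi_j,\phi_l\rangle|=\frac{2|\sin(\pi\xi_j)|}{\pi|\mu|}$. I would note that this bound is uniform over all admissible jitters $\xi_j\in[-1/4,1/4]$, so the randomness of $\xi_j$ plays no role; it remains to dominate the right-hand side by $\frac{8}{\pi}(1+|j-l|)^{-1}$, i.e.\ to prove \eqref{mixedlocalization} with $c_1=8/\pi$ and $s=1$.

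For that final estimate I would split into two cases. When $j\ne l$ we have $|j-l|\ge1$, hence $|\mu|\ge|j-l|-\tfrac14$; combining this with the crude bound $|\sin(\pi\xi_j)|\le1$ reduces the claim to the elementary inequality $2(1+|j-l|)\le 8(|j-l|-\tfrac14)$, which holds for all $|j-l|\ge1$. The main obstacle is the diagonal case $j=l$, where $\mu=\xi_j$ may be arbitrarily close to $0$ so that the factor $1/|\mu|$ blows up and the naive bound $|\sin(\pi\xi_j)|\le1$ is useless; here one must instead invoke $|\sin(\pi\xi_j)|\le\pi|\xi_j|$, which keeps the ratio $\frac{2|\sin(\pi\xi_j)|}{\pi|\xi_j|}$ bounded by $2\le 8/\pi$. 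Assembling the two cases gives \eqref{mixedlocalization} with the stated constant $c_1=8/\pi$ and completes the proof; note also that the value $s=1$ reflects the first-order decay $|\langle\psi_j,\phi_l\rangle|=O(1/|j-l|)$, placing this Fourier frame exactly at the boundary regime $s\le1$ treated in Section \ref{section:non-localized}.
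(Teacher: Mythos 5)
Your proof is correct and follows essentially the same route as the paper: orthogonality of the $\phi_j$ gives \eqref{goodlocalization} for every $t>0$ trivially, and the explicit evaluation $\langle\psi_j,\phi_l\rangle=\frac{2\sin(\pi(\lambda_j-l))}{\pi(\lambda_j-l)}$ followed by the case split $j=l$ versus $j\neq l$ (using $|\lambda_j-l|\geq |j-l|-\tfrac14$ and $|\sin x/x|\le 1$) is exactly the paper's argument. Your version is marginally more careful in noting that the $\phi_j$ have norm $\sqrt{2}$ rather than $1$ and in spelling out the elementary inequality behind the constant $8/\pi$, but these are cosmetic differences.
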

\begin{proof}
Since $\{\phi_j\}_{j\in\bZ}$ is orthonormal, $\eqref{goodlocalization}$ holds for all $t>0$.
To check $\eqref{mixedlocalization}$, we have for $j,l\in \bZ$:
\begin{equation*}
\left| \langle \psi_j, \phi_l\rangle \right|= \left| \int_{-1}^1 \se^{-\si\pi (\lambda_j -l)x}dx \right| = \left| \frac{2}{\pi (\lambda_j - l)} \sin\left(\pi(\lambda_j -l) \right) \right| .
\end{equation*}
When $j=l$, $|\lambda_j -l| = |\xi_j| \leq 1/4$ and $\left| \langle \psi_j, \phi_l\rangle \right| \leq 2$. When $j\neq l$, $|\lambda_j -l| = | j + \xi_j - l| \geq \frac{1}{4} (1+|j-l|)$ and $\left| \langle \psi_j, \phi_l\rangle \right| \leq \frac{2}{\pi} |\lambda_j -l|^{-1} \leq \frac{8}{\pi} (1+|j-l|)^{-1}$. It follows that $\eqref{mixedlocalization}$ holds with $c_1=8/\pi$ and $s=1$.
\end{proof}

Note that when $\{\phi_j\}_{j\in \bZ}$ is the Fourier basis, the matrix $\mPhi_n$ is the identity matrix. The $m$ in \eqref{choose-m-reconstruction} reduces to ${\mathcal O}(n)$. We have the following result.

\begin{prop}
Suppose $\{\psi_j\}_{j\in \bZ}$ and $\{\phi_j\}_{j\in \bZ}$ are given in \eqref{FourierFrame} and $\eqref{FourierBasis}$ respectively. If \eqref{framecoeff} holds and $m= \frac{A\pi^2+ 128 }{A\pi^2} n$, then
\begin{equation*}
\|f-f_{n,m}\| \leq c n^{-(t-1/2)}.
\end{equation*}
\end{prop}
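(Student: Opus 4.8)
The plan is to obtain this as a direct specialization of Theorem \ref{thm-reconstruction} to the Fourier frame/basis pair \eqref{FourierFrame}--\eqref{FourierBasis}, with the localization constants supplied by Lemma \ref{lem-FourierFrame} and the simplification afforded by orthonormality. First I would record the three inputs that Theorem \ref{thm-reconstruction} requires. By Lemma \ref{lem-FourierFrame}, $\{\phi_j\}_{j\in\bZ}$ is admissible with respect to $\{\psi_j\}_{j\in\bZ}$: the self-localization \eqref{goodlocalization} holds for every $t>0$, and the mixed localization \eqref{mixedlocalization} holds with $c_1=8/\pi$ and $s=1$ (so in particular $s>\tfrac12$). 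Moreover, since $\{\phi_j\}_{j\in\bZ}$ is orthonormal, the Gram matrix $\mPhi_n=[\langle\phi_j,\phi_l\rangle]_{j,l=1}^n$ is the $n\times n$ identity, whence $\lambda_{\min}(\mPhi_n)=1$ for all $n$.

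Next I would verify that the stated $m$ is exactly the choice \eqref{choose-m-reconstruction} under these values. Substituting $s=1$, $c_1=8/\pi$, and $\lambda_{\min}(\mPhi_n)=1$, the exponent $\tfrac{1}{2s-1}$ equals $1$ and the bracket in \eqref{choose-m-reconstruction} reduces to $\tfrac{2(8/\pi)^2 n}{A}=\tfrac{128\,n}{A\pi^2}$, so that
\begin{equation*}
m = n + \frac{128\,n}{A\pi^2} = \frac{A\pi^2+128}{A\pi^2}\,n,
\end{equation*}
which is precisely the hypothesis. With this choice of $m$, and with \eqref{mixedlocalization} ($s=1$), \eqref{goodlocalization} (any $t>0$), and the decay of the integer Fourier coefficients of $f$ all in force, Theorem \ref{thm-reconstruction} yields both $\|W_n^{-1}\|\le 2/A$ and the bound $\|f-f_{n,m}\|\le c\,n^{-(t-1/2)}$, which is the claim. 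The remainder of the argument is then pure substitution into Theorem \ref{thm-reconstruction}.

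The one point requiring care — and the only genuine obstacle — is the decay hypothesis on $f$. Theorem \ref{thm-reconstruction} is driven by \eqref{framecoeff2}, i.e. the decay $|\langle f,\phi_j\rangle|\le c\,j^{-t}$ of the \emph{integer} Fourier coefficients at rate $t$, whereas the proposition is phrased through \eqref{framecoeff}. I would bridge these by observing that in this setting $\langle f,\phi_l\rangle=\hat f(l)$ and $\langle f,\psi_j\rangle=\hat f(\lambda_j)$ are samples of the same continuous transform $\hat f$ at the integer nodes and at the jittered nodes $\lambda_j=j+\xi_j$ with $|\lambda_j-j|\le\tfrac14$; the smoothness of $f$ encoded in \eqref{framecoeff} (a decay rate $t$ for $\hat f$) therefore transfers to the integer coefficients, so that \eqref{framecoeff2} at rate $t$ holds. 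Once this reconciliation is made explicit, the conclusion follows immediately from Theorem \ref{thm-reconstruction}.
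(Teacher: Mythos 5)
Your proof takes exactly the same route as the paper, whose entire proof is the one-line observation that the result is an immediate consequence of Lemma \ref{lem-FourierFrame} and Theorem \ref{thm-reconstruction}; your substitution of $s=1$, $c_1=8/\pi$, and $\lambda_{\min}(\mPhi_n)=1$ into \eqref{choose-m-reconstruction} correctly recovers the stated value of $m$. Your remark that the hypothesis must really be the decay condition \eqref{framecoeff2} on $\langle f,\phi_j\rangle$ at rate $t$ (rather than \eqref{framecoeff} as literally cited) is a fair catch of what is evidently a typographical slip in the statement, and the paper's proof implicitly makes the same identification.
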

\begin{proof}
It is an immediate consequence of Lemma \ref{lem-FourierFrame} and Theorem \ref{thm-reconstruction}.
\end{proof}

We now present some numerical experiments to illustrate our results.
\begin{example}\label{example1}
$$f(x) = \se^{-x^2}$$
\end{example}
For each $2n+1$ frame elements used to reconstruct the function in \eqref{reconstruction}, we set $m=1.4 n$, where $2m+1$ is the number of given Fourier frame coefficients with the Fourier frame defined in \eqref{FourierFrame}.  We compare the numerical results of our method \eqref{reconstruction} with the Casazza-Christensen method. In both cases we employ the conjugate gradient acceleration method in Algorithm \ref{conjgradient} to construct the inverse frame operator with stopping criterion of relative error  less than 1.E-5.  
We also compare our results to the standard Fourier reconstruction, or $\lambda_j = j$ in (\ref{FourierFrame}) and $m = n$.  Note that in this case the frame operator is self-dual with $S = S^{-1} = I$.  
Table \ref{table:smooth1} compares the $L_2$ error, computational cost, and the condition number of these schemes for various $n$,  
and demonstrates that our method \eqref{reconstruction} converges more quickly with fewer iterations and has better conditioning than the Casazza-Christensen method. 
As is also evident from Table \ref{table:smooth1}, our new method provides the same rate of convergence as the standard Fourier partial sum.  In fact, as the smoothness of the target function increases, it is apparent that our numerical frame approximation yields the same {\em exponential} convergence properties as the harmonic Fourier approximation does.
Figure \ref{figure:smooth1} compares the function reconstructions and corresponding point-wise errors.  
 
\begin{table}[htbp]
\centering
\begin{tabular}{r| c c c|c c | c c}
\multirow{2}{*}{$n$} & \multicolumn{3}{c|}{error} & \multicolumn{2}{c|}{iterations} & \multicolumn{2}{c}{condition number}\\
 & (a) & (b) & (c) & (a) & (b) & (a) & (b)\\
\hline
16     & 4.6E-2&  1.4E-3&  1.4E-3       &  20 & 12  &  23.4 &  4.6 \\
 32    &  2.1E-2&  6.0E-4&  6.0E-4  &  24  &  12  &  23.8 &  4.2 \\
 64   &  1.2E-2  &  2.6E-4 &  2.6E-4    &  25  &  12 &  23.9  &  4.5\\
 128   &  9.2E-3  & 1.3E-4 & 1.3E-4   &  28  &  13  &  28.8  &  5.4\\
 256  &  7.6E-3&  6.0E-5&  6.0E-5   &  29 &  13   &  33.4 &  5.8 \\
\end{tabular}
\caption{Results using  (a) the Casazza-Christensen method, (b) our new method \eqref{reconstruction} with $m = 1.4n$ and (c) the standard Fourier reconstruction method for Example \ref{example1} . }\label{table:smooth1}
\end{table}

\begin{figure}[htbp]
\centering
\subfloat[]{\label{1a}\includegraphics[scale=0.2]{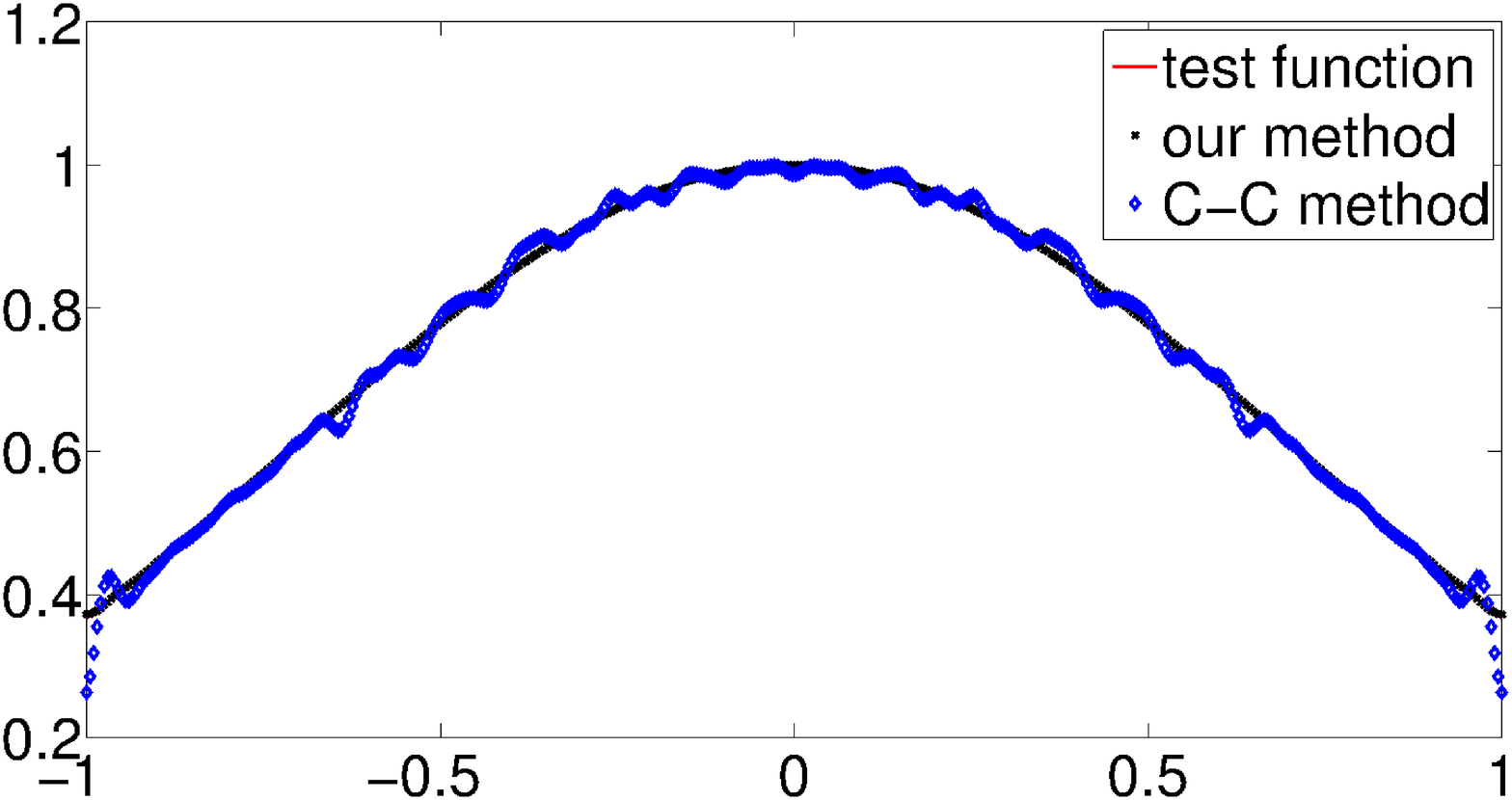}}\\
\subfloat[]{\label{1b}\includegraphics[scale=0.2]{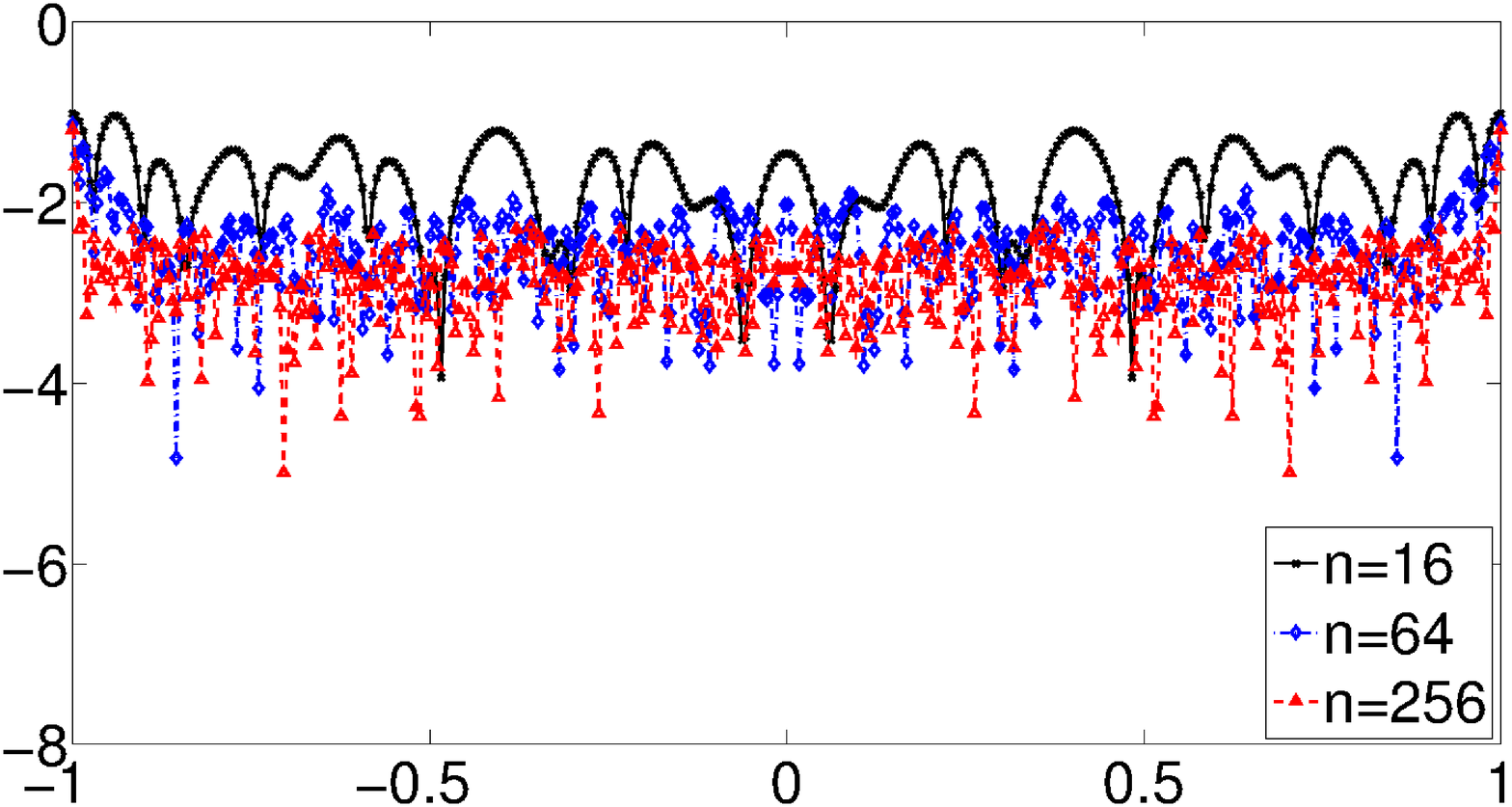}}
\subfloat[]{\label{1c}\includegraphics[scale=0.2]{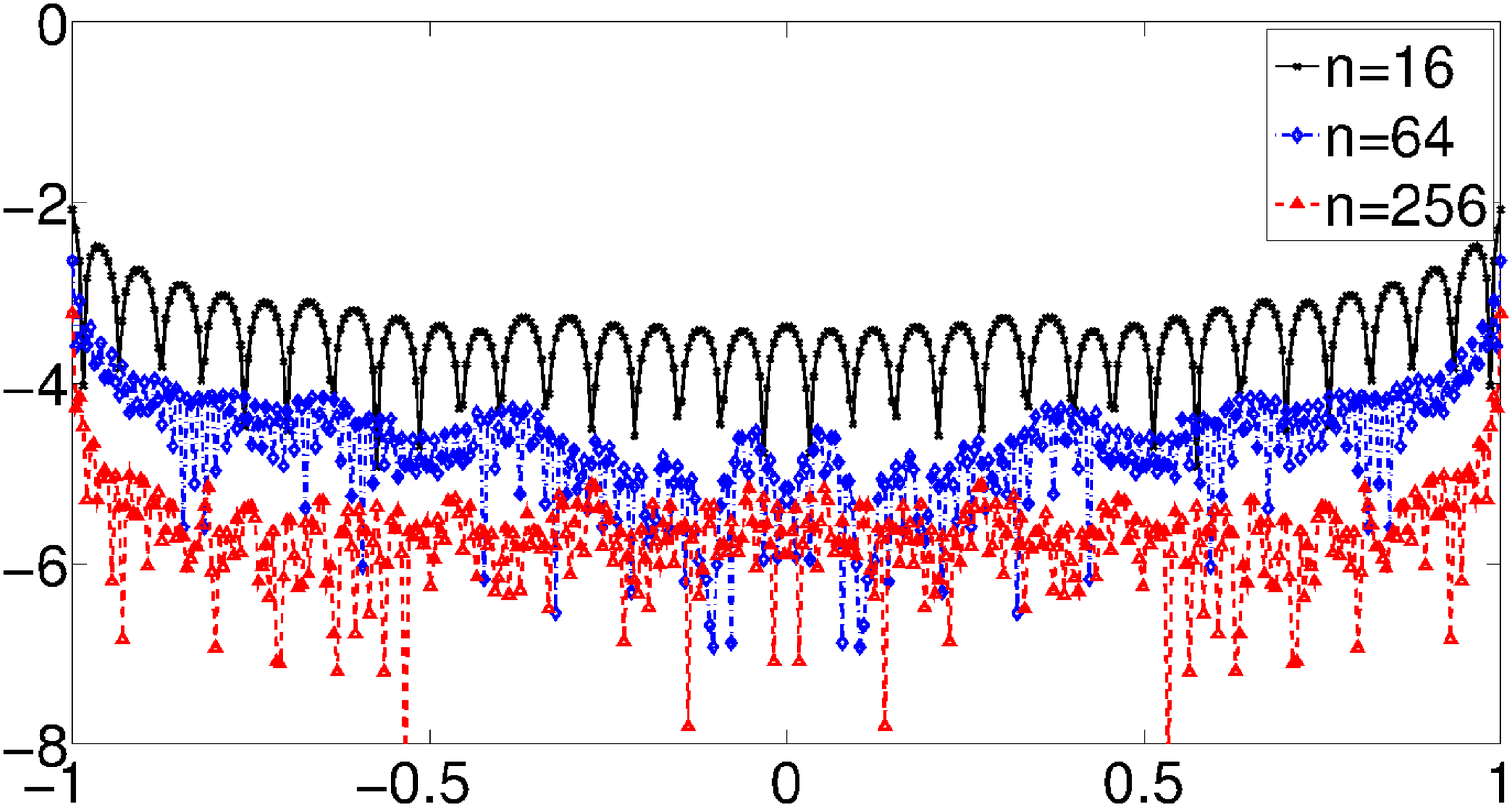}}
\caption{(a) Reconstruction for Example \ref{example1}; (b) Point-wise $log_{10}$ error of the reconstruction by the Casazza-Christensen method; (c) Point-wise $log_{10}$ error of the reconstruction by our method \eqref{reconstruction}.}
\label{figure:smooth1}
\end{figure}


\begin{example}\label{example2}
$$f(x) = \cos^3(\pi x) (\sin^2(x) +1 )$$
\end{example}
Here we assume we are given fewer frame coefficients, $m = 1.2n$.  Table \ref{table:smooth2} compares the $L_2$ error, computational cost, and the condition number of our method (\ref{reconstruction}) to the Casazza-Christensen method, and also displays the standard Fourier approximation error.  Figure \ref{figure:smooth2} displays the reconstructions and point-wise errors for each method. 
\begin{table}[htbp]
\centering
\begin{tabular}{r| c c c|c c | c c}
\multirow{2}{*}{$n$} & \multicolumn{3}{c|}{error} & \multicolumn{2}{c|}{iterations} & \multicolumn{2}{c}{condition number}\\
 & (a) & (b) & (c) & (a) & (b) & (a) & (b)\\
\hline
16    & 2.6E-2  &  1.8E-3  &  1.8E-3     &  21  & 12  &  19.9  &  4.5\\
32  & 1.0E-2  &  7.4E-4   &  7.3E-4     &  22  &  12 &  20.9 &  4.5 \\
64  & 2.2E-2 &  3.2E-4  &  3.2E-4    &  27  &  13  &  28.3  &  5.4\\
128  & 1.8E-3 &  1.6E-4  &  1.6E-4    &  30  &  13  &  30.5  &  5.5\\
256 & 4.7E-3  &  7.3E-5  &  7.3E-5   &  30  &  13  &  32.8  &  5.7\\
\end{tabular}
\caption{Comparison of (a) the Casazza-Christensen method, (b) our new method (\ref{reconstruction}), and (c) the standard Fourier reconstruction method for Example \ref{example2}. }\label{table:smooth2}
\end{table}

\begin{figure}[htbp]
\centering
\subfloat[]{\label{2a}\includegraphics[scale=0.2]{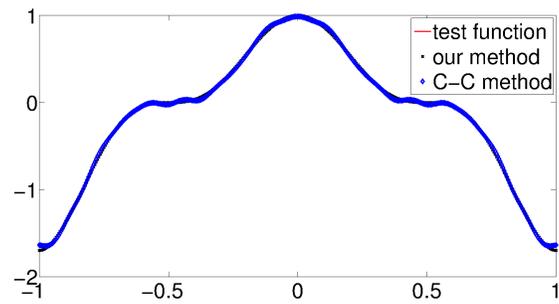}}\\
\subfloat[]{\label{2b}\includegraphics[scale=0.2]{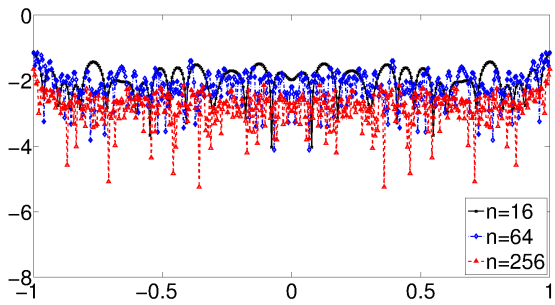}}
\subfloat[]{\label{2c}\includegraphics[scale=0.2]{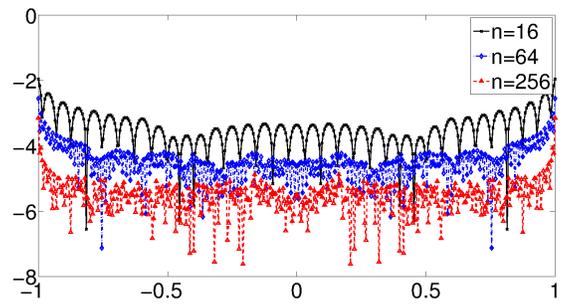}}
\caption{(a) Reconstruction of $f(x)=\cos^3(\pi x) (\sin^2(x) +1 )$; (b) Point-wise $log_{10}$ error for the Casazza-Christensen method; (c) Point-wise $log_{10}$ error for (\ref{reconstruction}); }
\label{figure:smooth2}
\end{figure}

Once again, as illustrated in Table \ref{table:smooth2} and Figure \ref{figure:smooth2}, we see that our method (\ref{reconstruction}) converges even when less sampling data ($m=1.2n$) is used, and its numerical properties are better than for the Casazza-Christensen method.  It appears as though since the Fourier frame is {\em not} well localized, the convergence rate analysis in  Section \ref{section:localized} does not apply.

\begin{example}\label{example3}
$$f(x) = (1-x^2)^3.$$
\end{example}
Example \ref{example3} provides a smoother test case.  Table \ref{table:smooth3} and Figure \ref{figure:smooth3} compare the results using our method with those from the Casazza-Christensen method with $m = 1.4n$.  Also, once again we see that the convergence rate for our method is nearly identical to that of the standard Fourier approximation.
\begin{table}[htbp]
\centering
\begin{tabular}{r| c c c|c c | c c}
\multirow{2}{*}{$n$} & \multicolumn{3}{c|}{error} & \multicolumn{2}{c|}{iterations} & \multicolumn{2}{c}{condition number}\\
 & (a) & (b) & (c) & (a) & (b) & (a) & (b)\\
\hline
16   & 3.3E-2  &  2.1E-5  &  2.1E-5   & 30   &  18  &  20.8 &  4.5  \\
32 & 9.9E-3  &  2.0E-6   & 2.0E-6   &  35  &  18  &  24.4 &  4.9  \\
64  & 7.0E-4 &  2.0E-7  &  1.9E-7   &  40 &  18  &  28.3 &  5.3 \\
128  & 3.9E-3 &  2.1E-8  &  2.0E-8   &  41  &  19  &  30.1 &  5.5  \\
256 & 1.7E-2  &  2.8E-9  &  2.1E-9  &  46 &  21 &  30.3 &  6.1 \\
\end{tabular}
\caption{Comparison of (a) the Casazza-Christensen method, (b) our new method (\ref{reconstruction}), and (c) the standard Fourier reconstruction method for Example \ref{example3}. }\label{table:smooth3}
\end{table}

\begin{figure}[htbp]
\centering
\subfloat[]{\label{3a}\includegraphics[scale=0.15]{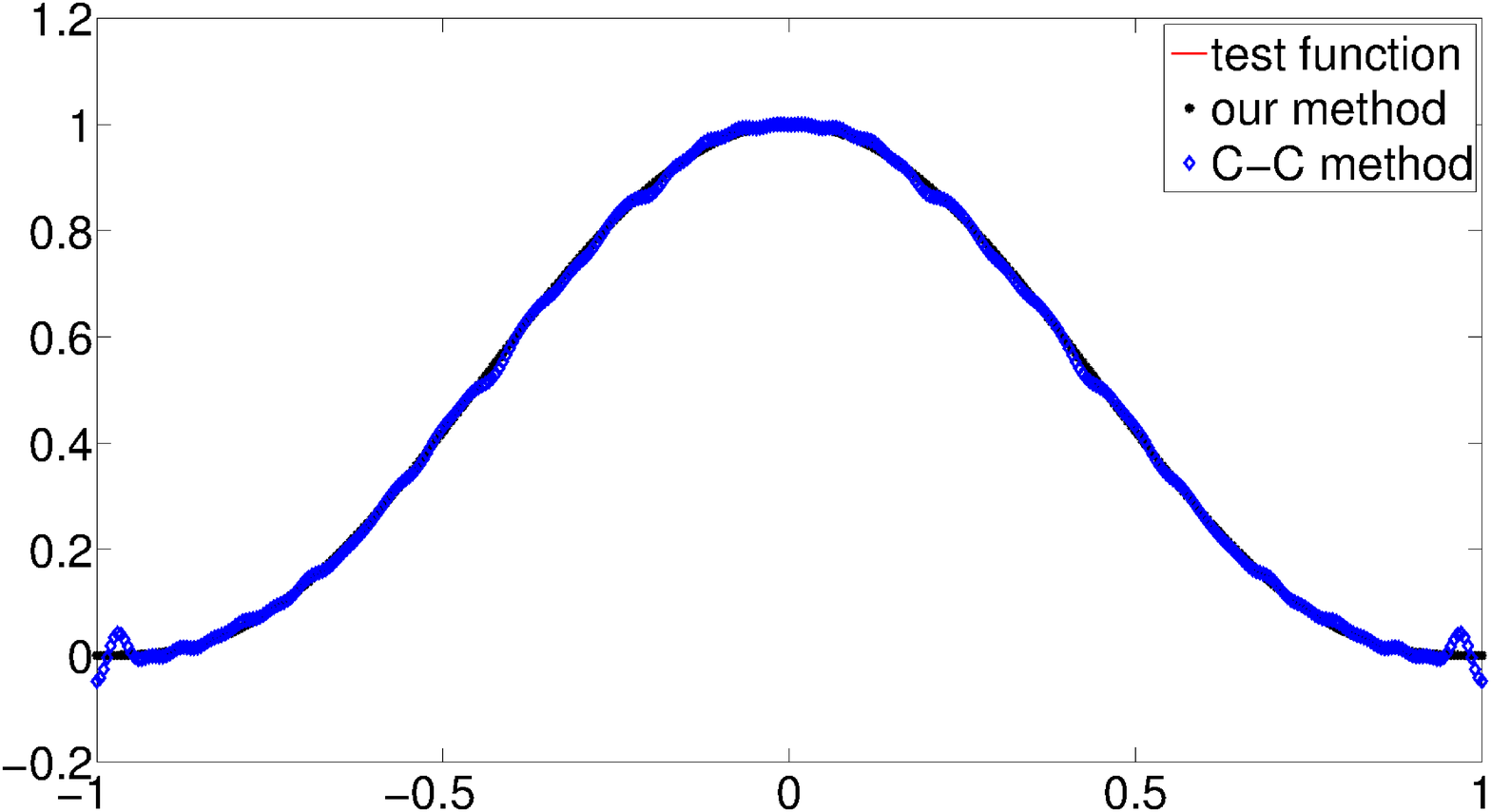}}\\
\subfloat[]{\label{3b}\includegraphics[scale=0.15]{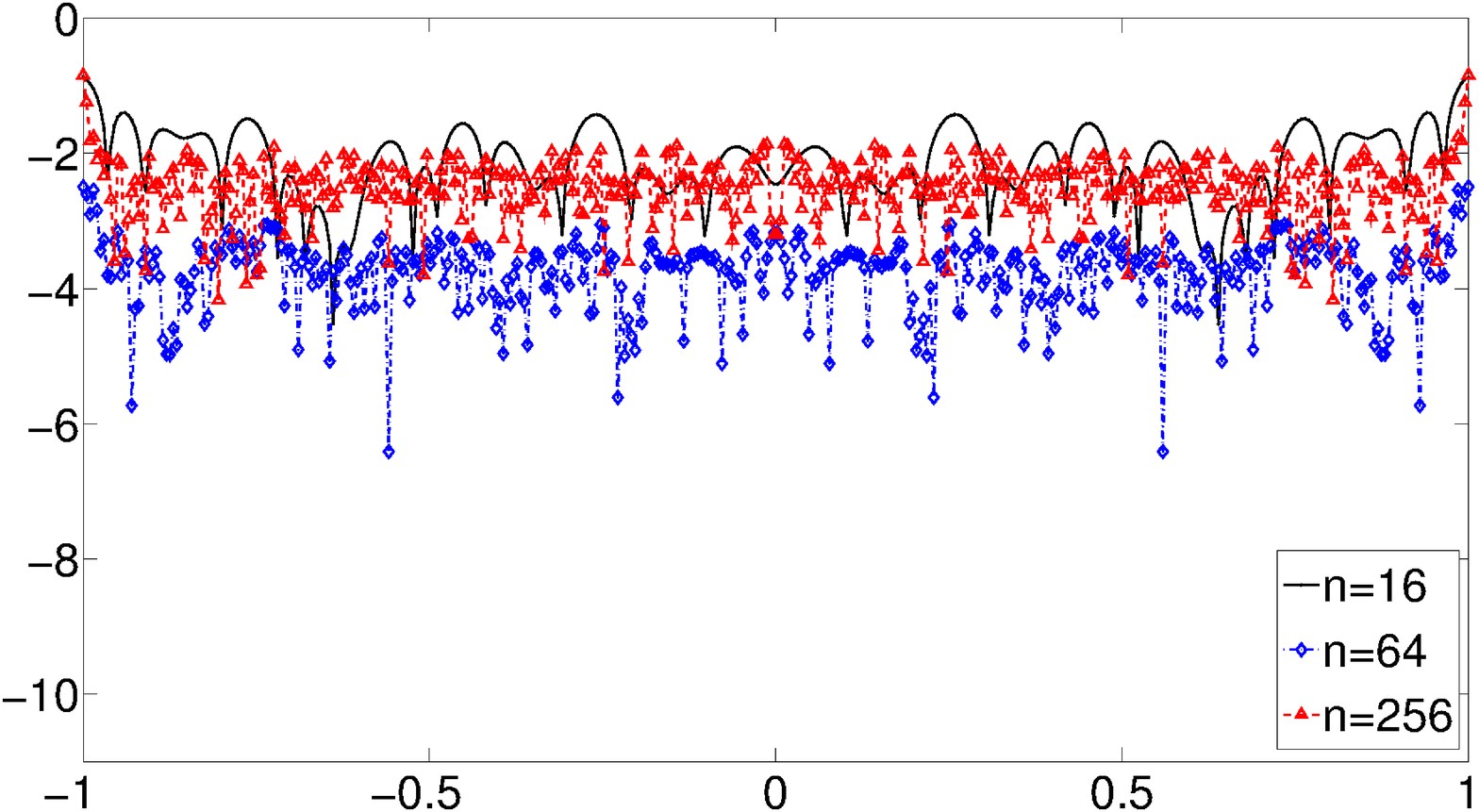}}
\subfloat[]{\label{3c}\includegraphics[scale=0.15]{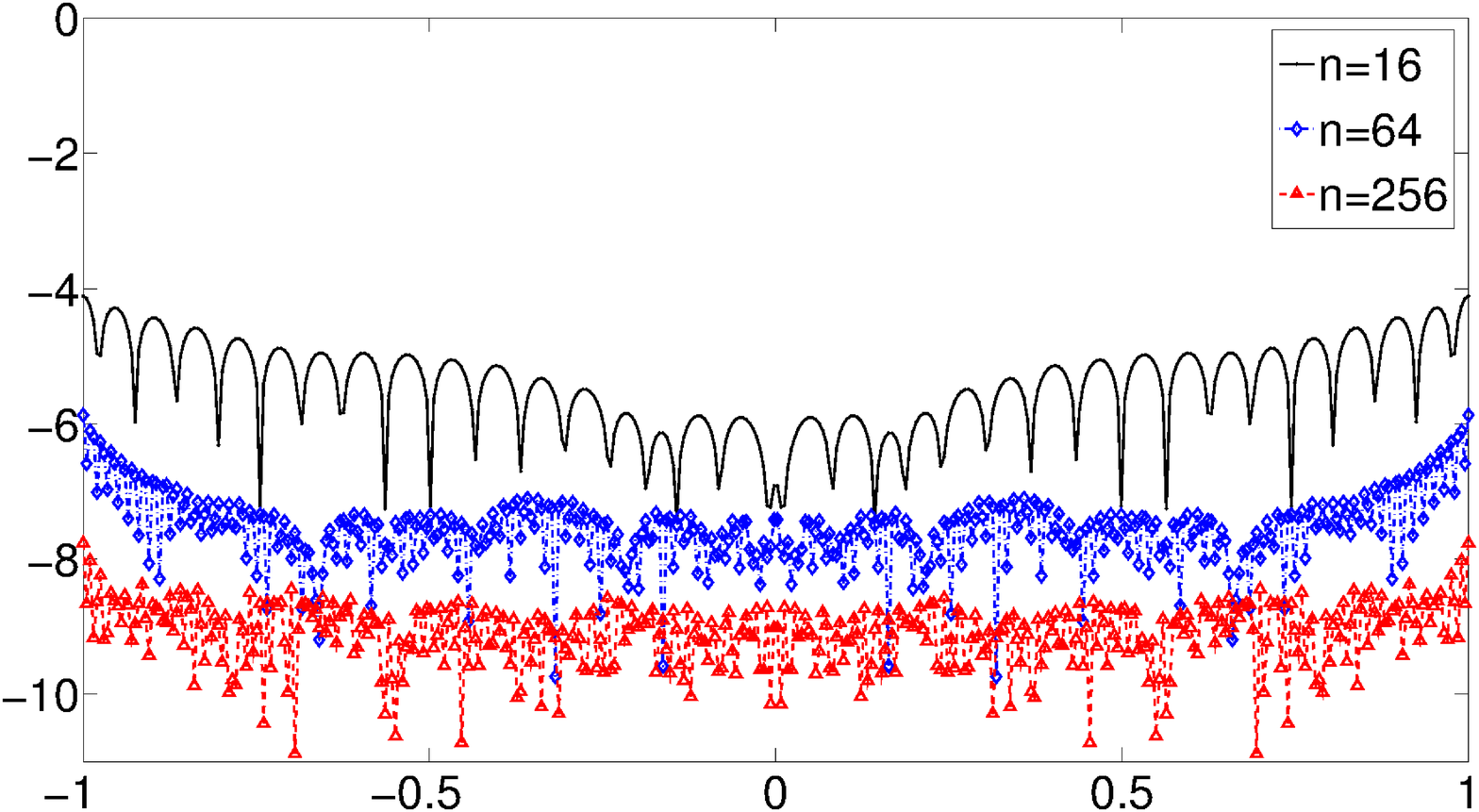}}
\caption{(a) Reconstruction of $f(x)=(1-x^2)^3$; (b) Point-wise $log_{10}$ error for the Casazza-Christensen method; (c) Point-wise $log_{10}$ error for (\ref{reconstruction}); }
\label{figure:smooth3}
\end{figure}

\begin{rem}\label{rem:remark5}
It is evident that our numerical frame approximation (\ref{reconstruction}) depends upon the convergence properties of the admissible frame.  In particular, if a Fourier basis is used, the reconstruction depends upon the smoothness and periodicity of the target function $f$.  
Hence when the target function is not smooth or not periodic, the method will suffer from the Gibbs phenomenon. One possible way to overcome this difficulty is to employ a post-processing technique, such as filtering or spectral reprojection, on the reconstruction.   In fact, it was shown in \cite{GelbHines} that it is possible to obtain exponential convergence when recovering piecewise smooth functions using spectral reprojection for frames.  On the other hand, we may consider the projection on some other well-localized frames such as polynomial frames instead of the Fourier basis used in approximation of the inverse frame operator. In other words, we should identify some well-localized frames that can fit in our setting and represent the unknown target function well without the Gibbs phenomenon. We shall leave these ideas to future investigations.
\end{rem}

\section{Concluding Remarks}
\label{section:conclusion}
In this investigation we constructed an approximation to the inverse frame operator.  We then used this approximation to develop a new reconstruction method when given a finite number of frame coefficients.  Our method is especially useful
when the original frame coefficients are not well localized, that is, the frame has localization rate no more than $1$.    It is important to point out that the number of samples required for our method is typically of the same order as the number of terms in the reconstruction.  The method  can also be used to improve the convergence rate in the case when the frame has localization rate greater than $1$. This is done through the introduction of {\em admissible} frames and the projection from the space spanned by the original frame elements onto the finite-dimensional subspace spanned by the admissible frame. Our numerical results demonstrate that our new method provides faster convergence with fewer iterations than the Casazza Christensen method, and moreover, the decay rate of the projected coefficients is the same as if the samples were originally given on the admissible frame.  Because of this it appears that in most cases a Riesz basis should be used as the admissible frame, since (1) it means that fewer samples $m$ are originally required and (2) it generates a more robust approximation of the inverse frame operator.  
However, when a sparse representation is necessary, a redundant frame may be more suitable.  

As discussed in Section \ref{section:reconstruction}, in the case of using the Fourier basis as the admissible frame, the reconstruction will yield the Gibbs phenomenon for piecewise smooth functions.  The spectral reprojection method \cite{GelbHines} may be used to post-process the reconstruction and recover exponential convergence.  On the other hand, it may prove to be more useful to use an admissible frame that makes different smoothness assumptions on the target function.

Finally, in this study  we considered only the noise-free case. When the sampling data is noisy, regularization techniques can be incorporated into our approach to obtain a robust and efficient approximation of both the inverse frame operator and the target function.   This idea, along with the others discussed in the preceding paragraphs, will be addressed  in future investigations.

\bibliographystyle{siam}
\bibliography{frame}

\end{document}